\documentclass[a4paper]{amsart}

\usepackage{amscd,amsthm,amsfonts,amssymb,amsmath}
\usepackage{mathrsfs}
\usepackage[colorlinks=true]{hyperref}
\usepackage{fullpage}
\usepackage[all]{xy}
\usepackage{verbatim}


    \newcommand{\BA}{{\mathbb {A}}} \newcommand{\BB}{{\mathbb {B}}}
    \newcommand{\BC}{{\mathbb {C}}} 
    \newcommand{\BE}{{\mathbb {E}}} \newcommand{\BF}{{\mathbb {F}}}

     \newcommand{\BL}{{\mathbb {L}}}
     
     \newcommand{\BP}{{\mathbb {P}}}
    \newcommand{\BQ}{{\mathbb {Q}}}

     \newcommand{\BZ}{{\mathbb {Z}}}

     \newcommand{\CH}{{\mathcal {H}}}

    \newcommand{\CO}{{\mathcal {O}}}

    \newcommand{\ab}{{\mathrm{ab}}}

    \newcommand{\Aut}{{\mathrm{Aut}}}

    \newcommand{\Char}{{\mathrm{Char}}}

    \newcommand{\End}{{\mathrm{End}}}

    \newcommand{\Gal}{{\mathrm{Gal}}} \newcommand{\GL}{{\mathrm{GL}}}

     \newcommand{\Pic}{\mathrm{Pic}}

    \renewcommand{\mod}{\ \mathrm{mod}\ }

    \newcommand{\Sel}{{\mathrm{Sel}}}

    \newcommand{\tor}{{\mathrm{tor}}}

    \newcommand{\Vol}{{\mathrm{Vol}}}

     \newcommand{\M}{\mathrm{M}}
        \newcommand{\Tr}{\mathrm{Tr}}

    \newcommand{\lrb}[1]{\left(#1\right)} \newcommand{\BetaI}[1]{{\left\{#1 \right\} }}

\newcommand{\matrixx}[4]{\begin{pmatrix}
#1 & #2 \\ #3 & #4
\end{pmatrix} }        

    \font\cyr=wncyr10

    \newcommand{\Sha}{\hbox{\cyr X}}
    \newcommand{\wh}{\widehat}

    \newcommand{\ov}{\overline}

    \newcommand{\lra}{\longrightarrow}
    \newcommand{\ra}{\rightarrow}

                            \newcommand{\hilbert}[4]{\left(\frac{#1,#2}{#3;#4}\right)}

    \theoremstyle{plain}
    \newtheorem{thm}{Theorem}[section] \newtheorem{coro}[thm]{Corollary}
    \newtheorem{lem}[thm]{Lemma}  \newtheorem{prop}[thm]{Proposition}

\theoremstyle{remark} \newtheorem{remark}{Remark}[section]
\theoremstyle{remark} 
\theoremstyle{remark} 


    \numberwithin{equation}{section}

\begin{document}
\title{Cube sums of form $3p$ and $3p^2$ II}
\author{Jie Shu and Hongbo Yin}
\begin{abstract}
Let  $p\equiv 2,5\mod 9$ be a prime. We prove that both $3p$ and $3p^2$ are cube sums. We also establish some explicit Gross-Zagier formulae and investigate the 3 part full BSD conjecture of the related elliptic curves. 
\end{abstract}
\address{School of Mathematical Sciences, Tongji University, Shanghai 200092}
\email{shujie@tongji.edu.cn}
\address{School of Mathematics, Shandong University, Jinan 250100,
P.R.China}

\email{yhb2004@mail.sdu.edu.cn}
\thanks{Jie Shu is supported by NSFC-11701092; Hongbo Yin is supported by NSFC-11701548 and Young Scholar Program of Shandong University.}

\maketitle


\section{Introduction}
We call a nonzero rational number a cube sum if it is of the form $a^3+b^3$ with $a,b\in \BQ^\times$. For the history and background about this Diophiantin problem please refer to \cite{DV1}\cite{DV17}\cite{HSY}\cite{SSY}.  Up to now, only four family numbers with many prime factors are proved to be cube sums \cite{Satge}\cite{Coward} and the Sylvester conjecture concerning primes $p$ only has partial result \cite{DV17}.
In this paper, we mainly prove the following theorem completing our partial result in \cite{SSY} using a different construction.
\begin{thm}\label{main}
Let  $p\equiv 2,5\mod 9$ be a prime. Then both $3p$ and $3p^2$ are cube sums.
\end{thm}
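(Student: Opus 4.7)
The plan is to reduce the theorem to showing that the elliptic curve $E:=E_{3p}$ defined by $x^3+y^3=3p$, or equivalently its Weierstrass model $y^2=x^3-432(3p)^2$, has rank at least one over $\BQ$. Since $E_{3p}$ and $E_{3p^2}$ are $3$-isogenous over $\BQ$, their Mordell--Weil ranks agree, so a single rank assertion yields cube sum representations of both $3p$ and $3p^2$. Because the curve has CM by $\BZ[\zeta_3]$ and $p\equiv 2,5\pmod 9$, the local root numbers force the global root number of $L(E/\BQ,s)$ to be $-1$, which makes this a natural setting for a Heegner point construction.

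I would next choose an auxiliary imaginary quadratic field $K$, different from the one used in \cite{SSY}, so as to satisfy the Heegner/Saito--Tunnell hypothesis for $E$ relative to the split primes of $E$'s conductor $27p^2$ and so that the twisted sign of the functional equation of $L(E/K,s)$ is $-1$ (forcing a zero at $s=1$, hopefully simple). The residue conditions $p\equiv 2,5\pmod 9$ will determine the splitting behaviour of $p$ and $3$ in $K$ and hence pin down a suitable $K$; a natural candidate is a field in which $p$ splits and $3$ is inert (or the other way around), chosen so that the appropriate quaternion algebra branches at the right places. From CM theory on the associated (possibly Shimura) modular curve one constructs a point $P_K\in E(K)$.

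The heart of the argument is an \emph{explicit} Gross--Zagier formula: I would compute the Néron--Tate height $\hat h(P_K)$ in terms of $L'(E/K,1)$ with all local constants made explicit, so that the non-triviality of $P_K$ becomes equivalent to the non-vanishing of a specific central derivative. Using the factorisation $L(E/K,s)=L(E/\BQ,s)\cdot L(E^K/\BQ,s)$, and the fact that $L(E/\BQ,1)=0$ by the root number, it suffices to establish $L(E^K/\BQ,1)\neq 0$ and $L'(E/\BQ,1)\neq 0$, or to bundle these into a single non-vanishing of $L'(E/K,1)$. For the CM curve $E$, both factors are $L$-values of Hecke characters on $\BQ(\sqrt{-3})$, so one can invoke classical Eisenstein/theta-series formulas to express the relevant value $L(E^K/\BQ,1)$ as an algebraic number with an explicit $3$-adic formula, and check non-vanishing by a congruence argument modulo $3$ tailored to the residues $p\equiv 2,5\pmod 9$.

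I expect the hardest step to be this last one: producing a closed, sufficiently explicit formula for the algebraic part of the central $L$-value attached to the chosen twist and pinning down its $3$-adic valuation in a way that genuinely uses (and genuinely requires) $p\equiv 2,5\pmod 9$, so as not to duplicate the range already handled in \cite{SSY}. Once the non-vanishing modulo $3$ is in hand, $P_K$ is non-torsion, $\mathrm{rank}\,E(K)\ge 1$, and by tracking the action of $\Gal(K/\BQ)$ on $P_K$ under the sign of the functional equation we descend to a non-torsion point in $E(\BQ)$, completing the proof of Theorem~\ref{main}. The Gross--Zagier formula obtained along the way, together with the $3$-adic information on $L'$, should in addition yield the $3$-part of the BSD formula advertised in the abstract.
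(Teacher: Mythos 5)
Your opening reduction already has an error that sinks the plan: $E_{3p}$ and $E_{3p^2}$ are \emph{not} $3$-isogenous over $\BQ$. They are cubic twists of each other by $\BQ(\sqrt[3]{p})$, and cubic twists of CM curves in this family genuinely have different $L$-functions (the associated Hecke characters differ by a nontrivial cubic character) and typically different ranks --- indeed Theorem \ref{main2} of the paper exploits precisely such a rank mismatch for the cubic-twist pair $(E_p, E_{3p^2})$. So a single rank assertion for $E_{3p}$ does not propagate to $E_{3p^2}$; you would need two independent constructions, which is exactly why the paper produces two Heegner points $z_1,z_2$ and tracks their Galois eigenvalues separately.

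The rest of the proposal follows an analytic route that the paper does not take for Theorem \ref{main}, and as described it has a circularity. In your scheme the Heegner point $P_K$ has height proportional to $L'(E/K,1)=L'(E/\BQ,1)\,L(E^K/\BQ,1)$, so non-torsion requires \emph{both} $L(E^K/\BQ,1)\neq 0$ \emph{and} $L'(E/\BQ,1)\neq 0$. The second is essentially the rank-one non-vanishing you are ultimately trying to establish, and it is not an algebraic central value, so the ``congruence mod $3$'' technology applies only to $L(E^K/\BQ,1)$ and does not close the loop. The paper avoids this entirely: it works with Heegner points on $X_\Gamma\simeq E_9$ (a genus-one quotient of $X_0(3^5)$) carrying CM by the order $\CO_{9p}\subset\CO_K$ with $K=\BQ(\sqrt{-3})$ the CM field itself --- there is no auxiliary imaginary quadratic field. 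Non-triviality is then a purely algebraic computation: after tracing to $L_{(p)}=K(\sqrt[3]{p})$, the point $z_i$ satisfies $z_i^{\sigma_{\omega_3}}=[\omega^i]z_i+\tfrac{p+1}{3}(0,36\sqrt{-3})$, while the torsion of $E_3(L_{(p)})$ is just $\{O,(0,\pm 36\sqrt{-3})\}$, which cannot satisfy that relation; hence $z_i$ is non-torsion. Twisting by $\phi_p,\phi_{p^2}$ then lands non-torsion points in $E_{3p}(K)$ and $E_{3p^2}(K)$, and one descends to $\BQ$ by comparing $\BZ$- and $\CO_K$-ranks. The explicit Gross--Zagier formula appears only later, in the proof of Theorem \ref{main2}, not in the proof of Theorem \ref{main}.
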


Since $2$ is a cube sum, from now on, we may assume $p\equiv 2,5\mod 9$ is an odd prime number. Let $E_n$ be the elliptic curve given by $x^3+y^3=nz^3$. It has the Weierstrass equation $y^2=x^3-432n^2$. If $n>2$ is not a cube, then $E_n(\BQ)_{\tor}=0$ and $n$ is a cube sum if and only $E_n(\BQ)$ has rank at least one. Following \cite{Satge87}, we use the Heegner points twisted from a fixed elliptic curve to prove the above theorem. 


In second part of this paper, we establish some explicit Gross-Zagier formulae (Theorem \ref{thm:GZ}) and use them to investigate the 3-part full BSD conjecture for $E_{3p}$ and $E_{3p^2}$. 
More explicitly,
let $\Sha(E_n)$, $E_n(\BQ)_\tor$, $\Omega_n$, $R(E_n)$ and $c_\ell(E_n)$ denote the Shafarevich-Tate group, the torsion subgroup,  the minimal real period, the regulator and the Tamagawa number of $E_n$ over $\BQ$ respectively. 
Then the full BSD conjecture predicts that if $L(s,E)$ is of order $r$ at $s=1$, then
\begin{equation*}
 |\Sha(E_{n})|=\frac{L^{(r)}(1,E_{n})}{\Omega_{n}\cdot R(E_n)}\cdot \frac{ |{E_{n}}(\BQ)_\tor|^2}{\prod_\ell c_\ell(E_{n})}.
\end{equation*}
Let $P$ (resp. $Q$) be a generator of the free part of $E_{3p^2}(\BQ)$(resp. $E_{3p}(\BQ)$). We prove that
\begin{thm}\label{main2}
Let $p\equiv 2 \mod 9$ be a rational prime number. Then 
\begin{equation}\label{bsd1}
|\Sha(E_p)|\cdot|\Sha(E_{3p^2})|=\frac{L(1,E_p)}{\Omega_p\cdot \widehat{h}_\BQ(P)}\cdot\frac{L'(1,E_{3p^2})}{\Omega_{3p^2}}\cdot \frac{ |{E_p}(\BQ)_\tor|^2}{\prod_\ell c_\ell(E_p)}\cdot \frac{ |{E_{3p^2}}(\BQ)_\tor|^2}{\prod_\ell c_\ell(E_{3p^2})},
\end{equation}
up to a power of $2p$.

Let $p\equiv 5 \mod 9$ be a rational prime number. Then 
\begin{equation}\label{bsd2}
|\Sha(E_{p^2})|\cdot|\Sha(E_{3p})|=\frac{L(1,E_{p^2})}{\Omega_{p^2}\cdot \widehat{h}_\BQ(Q)}\cdot\frac{L'(1,E_{3p})}{\Omega_{3p}}\cdot \frac{ |{E_{p^2}}(\BQ)_\tor|^2}{\prod_\ell c_\ell(E_{p^2})}\cdot \frac{ |{E_{3p}}(\BQ)_\tor|^2}{\prod_\ell c_\ell(E_{3p})},
\end{equation}
up to a power of $2p$.
\end{thm}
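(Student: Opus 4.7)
The plan is to package the two BSD identities in (\ref{bsd1}) together by passing to an imaginary quadratic field $K$ over which the Heegner-point construction behind Theorem~\ref{main} naturally lives. For $p \equiv 2 \pmod 9$ one chooses $K$ so that
$$L(s, E_{3p^2}/K) \;=\; L(s, E_{3p^2}) \cdot L(s, E_p),$$
which identifies $E_p$ with the quadratic $\chi_K$-twist of $E_{3p^2}$ up to the standard $3$-isogeny $E_n \to E_{n^2}$. The product of the two left-hand quantities in (\ref{bsd1}) then is precisely the BSD datum for $E_{3p^2}/K$, so I would attack it as a single whole rather than factor by factor. The argument for (\ref{bsd2}) when $p \equiv 5 \pmod 9$ is symmetric, with $E_{p^2}$ playing the rank-zero role and $E_{3p}$ the rank-one role.

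Step~1 applies the explicit Gross--Zagier formula of Theorem~\ref{thm:GZ} to the Heegner point $y_K \in E_{3p^2}(K)$ produced in the proof of Theorem~\ref{main}, yielding after careful tracking of local constants
$$\widehat{h}(y_K) \;=\; C \cdot \frac{L(1,E_p) \cdot L'(1,E_{3p^2})}{\Omega_p \cdot \Omega_{3p^2}},$$
where $C$ is an explicit rational number absorbing the Manin constant, the comparison between $\Omega_{E_{3p^2}/K}$ and $\Omega_p\Omega_{3p^2}$, and the local Tamagawa numbers; its prime factors should land in $\{2,3,p\}$.

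Step~2 relates $\widehat{h}(y_K)$ to $\widehat{h}_{\BQ}(P)$. Because $\mathrm{rank}\,E_p(\BQ)=0$, one has $\mathrm{rank}\,E_{3p^2}(K)=1$, so the $\BQ$-point $y := y_K + y_K^{\tau}$ can be written $y = mP + T$ with $m \in \BZ$ and $T$ torsion; the $\Gal(K/\BQ)$-eigenspace decomposition of the height pairing then gives $\widehat{h}(y_K) = \tfrac{m^2}{4}\,\widehat{h}_{\BQ}(P)$, up to a power of $2$. Step~3 is to combine Kolyvagin's Euler-system divisibility applied to $y_K$ with the restriction--corestriction identification
$$\Sha(E_{3p^2}/K)[\ell^\infty] \;\cong\; \Sha(E_{3p^2}/\BQ)[\ell^\infty] \oplus \Sha(E_p/\BQ)[\ell^\infty] \qquad (\ell \nmid 2p),$$
to conclude, up to powers of $2p$, the index identity $m^2 \sim |\Sha(E_{3p^2}/\BQ)| \cdot |\Sha(E_p/\BQ)|$. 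Substituting this into the output of Steps~1 and~2 produces (\ref{bsd1}).

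The chief obstacle is the $\ell = 3$ contribution in Step~3: $E_{3p^2}$ has CM by $\BZ[\omega]$, so the mod-$3$ Galois representation is reducible and Kolyvagin's bound is not sharp, while $C$ may contribute factors of $\sqrt{3}$ through the N\'eron/Shimura period comparison over $K$. For ``up to a power of $2p$'' to be honest, I would handle the $3$-part separately by the explicit $3$-isogeny descent of the introduction (the Selmer groups $S^{(\phi)}$, $S^{(\widehat{\phi})}$) to compute $\Sha(E_{3p^2})[3^\infty] \oplus \Sha(E_p)[3^\infty]$ directly; at primes $\ell \nmid 6p$ the CM Euler system of elliptic units can replace the Heegner-class input. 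The remaining comparison of $\Omega_p$, $\Omega_{3p^2}$, and $\Omega_{E_{3p^2}/K}$ inside $C$ is then routine bookkeeping.
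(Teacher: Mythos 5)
There is a genuine gap at the outset: the factorization $L(s, E_{3p^2}/K) = L(s, E_{3p^2}) \cdot L(s, E_p)$ over an imaginary quadratic $K$ would force $E_p$ to be a quadratic twist of $E_{3p^2}$ (or of its $3$-isogenous curve), and this is false --- a nontrivial quadratic twist of any curve $y^2=x^3-432n^2$ leaves the family entirely, and $E_p$, $E_{3p}$, $E_{3p^2}$ lie in three distinct isogeny classes over $\BQ$. What the paper actually uses (Section \ref{E-G-Z}) is the Artin-formalism decomposition for the base change of $E_9$ to $K=\BQ(\sqrt{-3})$ twisted by the \emph{cubic} character $\chi_{3p}$, namely $L(s,E_9,\chi_{3p})=L(s,E_p)L(s,E_{3p^2})$. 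Your Step~3's restriction--corestriction splitting of $\Sha(E_{3p^2}/K)$ therefore has no meaning, and the Kolyvagin index identity built on it does not apply. A second issue is Step~2: forming $y := y_K + y_K^\tau$ requires knowing the complex-conjugation action on the Heegner point, which is precisely what is \emph{not} available for the points coming from $\tau_1,\tau_2$; this is why the paper introduces the extra Heegner point $z_3$ from $\tau_3$, which is real by construction (see the remark after Theorem \ref{thm:Galois}).

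The paper's actual proof is much lighter at $\ell=3$ than any Kolyvagin-type argument. Proposition \ref{sha} shows by explicit $\phi$-isogeny descent that $\Sha(E_p)[3^\infty]=\Sha(E_{3p^2})[3^\infty]=0$, so the left side of (\ref{bsd1}) has trivial $3$-part. The right side is then controlled by the explicit Gross--Zagier formula for $z_3$ (Theorem \ref{thm:GZ2}) together with Corollary \ref{maincoro}, which says $z_3$ is not divisible by $3$ in the Mordell--Weil group; this makes $\widehat{h}_\BQ(z_3)/\widehat{h}_\BQ(P)$ a $3$-adic unit and closes the argument. No Euler-system index formula is invoked. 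You do flag the $3$-isogeny descent as the right tool at $\ell=3$, but you never isolate the crucial non-divisibility of the Heegner point by $3$, which is the step that replaces the $m^2\sim|\Sha|$ identity you were hoping to extract from Kolyvagin.
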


Note that by the work of Perrin-Riou \cite{PR1987}, Kobayashi \cite{Koba2013}, the $\ell$ part full BSD conjecture of $E_{3p}$ and $E_{3p^2}$ is known for $\ell\nmid 6p$. 
But the prime $3$ is very special in the Iwasawa theory for the elliptic curve family $E_D: y^2=x^3+D$ whose CM field $K=\BQ(\sqrt{-3})$ has $6$ roots of unity and $2$ is special for all elliptic curves.  In particular, there is no any general results about the $2$ and $3$ part full BSD conjecture of $E_D$.

\section{Modular Actions on Heegner Points}
\subsection{Modular curves and modular actions}
We will use the notations as in \cite[Section 2]{HSY} for the related modular curves. 
Recall $X_0(3^5)$ is the
classical modular curve over $\BQ$ of level $\Gamma_0(3^5)$.
Define $N$ to be the normalizer of $\Gamma_0(3^5)$ in $\GL_2^+(\BQ)$.
Then the linear fractional transformation action of $N$ on $X_0(3^5)$ induces an isomorphism
\[N/\BQ^\times\Gamma_0(3^5)\simeq \Aut_{\ov{\BQ}}(X_0(3^5)).\]
The quotient group $N/\BQ^\times\Gamma_0(3^5)\simeq S_3\rtimes \BZ/3\BZ$, where $S_3$ denotes the symmetric group with $3$ letters which is generated by the Atkin-Lehner operator $W=\begin{pmatrix}0&1\\-3^5&0\end{pmatrix}$ and the matrix $A=\begin{pmatrix}28&1/3\\3^4&1\end{pmatrix}$, and the subgroup $\BZ/3\BZ$ is generated by the matrix $B=\begin{pmatrix}1&0\\3^4&1\end{pmatrix}$.

Put
\[U=\langle U_0(3^5),W,A\rangle\subset \GL_2(\BA_f),\]
and $$\Gamma =\GL_2(\BQ)^+\cap U=\langle \Gamma_0(3^5),W,A\rangle,$$ and let $X_\Gamma$ be the
modular curve over $\BQ$ of level $\Gamma$ whose underlying Riemann surface is
\[X_\Gamma(\BC)=\GL_2(\BQ)^+\backslash\left (\CH\sqcup\BP^1(\BQ)\right )\times \GL_2(\BA_f)/U.\]
So the curve is the quotient of $X_0(3^5)$ by the actions of $W$ and $A$.
Then $X_\Gamma$ is a smooth projective curve over $\BQ$ of genus $1$, and the infinity cusp $[\infty]$ is rational over $\BQ$. We identify $X_\Gamma$ with an elliptic curve over $\BQ$ with $[\infty]$ as its zero element \cite[Proposition 2.1]{HSY}. Let $N_\Gamma$ be the normalizer of $\Gamma$ in $\GL_2(\BQ)^+$. Then we have a natural embedding
\[\Phi: N_\Gamma/ \BQ^\times\Gamma\hookrightarrow \Aut_{\ov{\BQ}}(X_\Gamma)\simeq \CO_K^\times\ltimes X_\Gamma(\ov{\BQ}),\]
where $\CO_K^\times $ embeds into $\Aut_{\ov{\BQ}}(X_\Gamma)$ by complex multiplications and $X_\Gamma(\ov{\BQ})$ embeds into $\Aut_{\ov{\BQ}}(X_\Gamma)$ by translations.
The matrices
\[B=\begin{pmatrix}1&0\\3^4&1\end{pmatrix},\quad C=\begin{pmatrix}1&1/9\\-3^3&-2\end{pmatrix}\]
lie in $N_\Gamma$, and hence induce automorphisms of $X_\Gamma$.

The elliptic curves $E_n$ are all endowed with complex multiplication by $K$ and we fix the complex multiplication $[\cdot]:\CO_K\simeq \End_K(E_n)$ by $[-\omega](x,y)=(\omega x,-y)$. We will always take the simple Weierstrass equation $y^2=x^3-2^4\cdot 3$ for the elliptic curve $E_9$.  We quote \cite[Proposition 2.1]{HSY} as follows.
\begin{prop}\label{modular-curve}
The elliptic curve $(X_\Gamma,[\infty])$ is isomorphic to $E_9$ over $\BQ$.
Moreover,  for any point $P\in X_\Gamma$, we have
\[\Phi(B)(P)=[\omega^2]P,\quad \Phi(C)(P)=[\omega^2]P+(0,4\sqrt{-3}).\]
 In particular, the automorphisms $\Phi(B)$ and $\Phi(C)$ are defined over $K$.

\end{prop}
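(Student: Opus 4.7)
The plan is to first identify $X_\Gamma$ as a specific elliptic curve over $\BQ$, then to determine the two automorphisms using the CM structure of $E_9$. To begin, I would compute the genus of $X_\Gamma$ via the Riemann--Hurwitz formula applied to the cover $X_0(3^5)\to X_\Gamma$ (or, equivalently, via the dimension of the $\langle W, A\rangle$-invariants in $S_2(\Gamma_0(3^5))$); both yield genus $1$. Combined with the rationality of the cusp $[\infty]$, this promotes $X_\Gamma$ to an elliptic curve over $\BQ$. To pin it down as $E_9$, extract the unique normalized newform spanning $S_2(\Gamma_0(3^5))^{\langle W,A\rangle}$ and match its Euler factors with those of the Hecke Gr\"ossencharacter attached to $E_9 : y^2 = x^3 - 48$. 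Since $E_9$ has conductor exactly $3^5$ and CM by $\CO_K = \BZ[\omega]$, Eichler--Shimura together with uniqueness inside the conductor-$3^5$ CM isogeny class forces $X_\Gamma \cong E_9$ over $\BQ$.

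For the automorphism part, every automorphism of $E_9$ over $\ov\BQ$ has the form $P \mapsto [u]P + T$ with unique $u \in \CO_K^\times = \{\pm 1, \pm\omega, \pm\omega^2\}$ and $T \in E_9(\ov\BQ)$. The matrix $B$ has order $3$ modulo $\BQ^\times\Gamma$, so $\Phi(B)$ is an order-$3$ automorphism of $X_\Gamma$. A direct $q$-expansion calculation shows that $B$ fixes the image of the cusp $[\infty]$ in $X_\Gamma$; this forces the translation part to vanish, and so $\Phi(B) = [u]$ for a primitive cube root of unity $u$. The precise value $u = \omega^2$ is then obtained by comparing the action of $B$ on the invariant differential at $[\infty]$ with the CM normalization $[-\omega](x,y) = (\omega x, -y)$.

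For $\Phi(C)$, note that $C$ represents the same coset as $B$ in $N_\Gamma/\BQ^\times\Gamma$ modulo the subgroup of translations (one checks $B^{-1}C$ lies in the translation part using its entries modulo $\Gamma$), so $\Phi(C) = [\omega^2] + T$ for some $T \in E_9(\ov\BQ)$. Since $\Phi(C)^3$ must be the identity, $T$ is a $3$-torsion point fixed by $[\omega^2]$; the $[\omega^2]$-fixed $3$-torsion consists of $O$ together with the two points $(0, \pm 4\sqrt{-3})$ on the $x$-axis. A sign-tracking argument comparing the action on tangent vectors at a fixed CM point distinguishes $T = (0, 4\sqrt{-3})$ from its inverse. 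That $\Phi(B)$ and $\Phi(C)$ are defined over $K = \BQ(\sqrt{-3})$ then follows because both $[\omega^2]$ and $(0, 4\sqrt{-3})$ are $K$-rational.

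The main obstacle is the explicit identification of $X_\Gamma$ with the Weierstrass model $y^2 = x^3 - 48$ to precision sufficient to read off both the CM character $u = \omega^2$ and the sign of the translation point in the formula for $\Phi(C)$. This requires either an explicit modular parametrization $X_\Gamma \to E_9$ or $q$-expansion computations of weight-$2$ forms to sufficiently high order; fortunately this is already carried out in \cite[Proposition 2.1]{HSY}, which we simply quote.
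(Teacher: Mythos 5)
The paper offers no proof of this proposition at all: it is quoted verbatim from \cite[Proposition 2.1]{HSY}, and the sentence immediately preceding it says exactly that. Your proposal tries to reconstruct the argument that \cite{HSY} must contain, and then also defers to \cite{HSY} at the very end — so in terms of what is actually established, you land in the same place as the paper (a citation).

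That said, the conceptual sketch you give has a genuine logical gap in the $\Phi(C)$ step. You write $\Phi(C)(P)=[\omega^2]P+T$ and argue that ``since $\Phi(C)^3$ must be the identity, $T$ is a $3$-torsion point fixed by $[\omega^2]$.'' But once the linear part is a primitive cube root of unity, the cube of such a map is \emph{automatically} the identity for every choice of $T$: indeed
\[
\Phi(C)^3(P)=P+[1+\omega^2+\omega^4]\,T=P+[1+\omega+\omega^2]\,T=P,
\]
since $1+\omega+\omega^2=0$. So the condition $\Phi(C)^3=\id$ places no constraint whatsoever on $T$, and in particular does not force $T$ to be $3$-torsion nor to lie in the $[\omega^2]$-fixed locus. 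The fact that $T=(0,4\sqrt{-3})$ must therefore be pinned down by direct computation (e.g.\ tracking the image of a specific cusp or CM point under $C$, via a $q$-expansion of the modular parametrization), which is exactly the content you end up outsourcing to \cite{HSY}. A smaller quibble: $(0,\pm 4\sqrt{-3})$ lie on the $y$-axis, not the $x$-axis. The rest of your sketch — genus computation, matching newforms to identify $X_\Gamma\cong E_9$, and the argument that $\Phi(B)$ is a pure CM automorphism because $B$ fixes $[\infty]$ and has order $3$ — is reasonable in outline, but you should be aware that the step you thought was a free constraint is actually vacuous.
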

Note that there exists a unique isomorphism $X_\Gamma\ra E_9$ over $\BQ$ such that the cusp $[1/9]$ has coordinates $(0,4\sqrt{-3})$. We use this isomorphism to identify $X_\Gamma$ with $E_9$.

Let $V\subset U_0(3^5)$  be the subgroup consisting of matrices
$\left (\begin{array}{cc}a&b\\c&d
\end{array} \right )$ with $a\equiv d\mod 3$, and put $U_0=\langle V,W,A \rangle$.  Let $X_\Gamma^0$ be the modular curve over $\BQ$ whose underlying Riemann
surface is
\[X_\Gamma^0(\BC)=\GL_2(\BQ)^+\backslash\left (\CH\bigsqcup\BP^1(\BQ)\right )\times \GL_2(\BA_f)/U_0.\]
The  modular curve $X_\Gamma^0$ is isomorphic to $X_\Gamma\times_\BQ K$ as a curve over $\BQ$.  Usually, we denote by $[z,g]_{U_0}$  the point  on  $X_\Gamma^0$ which is represented by the pair $(z,g)$ where $z\in \CH$ and $g\in \GL_2(\BA_f)$. Let $N_{\GL_2(\BA_f)}(U_0)$ be the normalizer of $U_0$ in
$\GL_2(\BA_f)$. Then there is a natural homomorphism
$$ N_{\GL_2(\BA_f)}(U_0)/U_0\longrightarrow \Aut_\BQ(X_\Gamma^0)$$ induced by right
translation on $X_\Gamma^0$: for $P=[z,g]_{U_0}\in X_\Gamma^0$ and $x\in N_{\GL_2(\BA_f)}(U_0)$
\[P\mapsto P^x=[z,gx]_{U_0}.\]

\subsection{Modular actions on Heegner points}
Let $p\equiv 2,5\mod 9$ be an odd prime number. Denote 
$$\tau_i=M_i\omega\in \CH,$$
where 
\[M_1=\matrixx{p/9}{0}{2}{1}, \quad M_2=\matrixx{p/9}{0}{5}{1}, \quad M_3=\matrixx{p/9}{0}{2}{4}, \quad \omega=\frac{-1+\sqrt{-3}}{2}.\]
For $i=1,2,3$, let $\rho_i:K\hookrightarrow \M_2(\BQ)$ be the normalised embedding (see \cite{CST17,HSY}) with fixed point $\tau_i\in \CH$.
Then $\rho_i$ are explicitly given by
\[\rho_1(\omega)=M_1\matrixx{-1}{-1}{1}{0}M_1^{-1}=
\begin{pmatrix}1&-p/9\\27/p&-2\end{pmatrix}.\]
\[\rho_2(\omega)=M_2\matrixx{-1}{-1}{1}{0}M_2^{-1}=
\begin{pmatrix}4&-p/9\\187/p&-5\end{pmatrix}.\]
\[\rho_3(\omega)=M_3\matrixx{-1}{-1}{1}{0}M_3^{-1}=
\begin{pmatrix}-1/2&-p/36\\27/p&-1/2\end{pmatrix},\]
Let $R_0(3^5)$ bet the standard Eichler order of discriminant $3^5$ in $\M_2(\BQ)$.  Then $\rho_1(K)\cap R_0(3^5)=\CO_{9p}$, $\rho_2(K)\cap R_0(3^5)=\CO_{9p}$ and $\rho_3(K)\cap R_0(3^5)=\CO_{36p}$. So $\tau_1$, $\tau_2$  is defined over $H_{9p}$ and $\tau_3$ is defined over $H_{36p}$ by the complex multiplication theory. Here $H_m$ is the ring class field of $K$ with conductor $m$. 
\begin{remark}\label{remark1}
In order to prove Theorem \ref{main}, we just need $\tau_1$ and $\tau_2$. But we do not how to get rational points over $\BQ$ from $\tau_1$ and $\tau_2$ since we do not know how the complex conjugation acts on them. In order to prove Theorem \ref{main2}, we need the help of $\tau_3$ which shares the same Galois action with $\tau_2$ but will give us real point directly. This is also the reason why only prove half cases in Theorem \ref{main2}.
\end{remark}

Let $\CO_{K,3}$ be the completion of $\CO_K$ at the unique place above $3$.
Let $\CO_{K,3}$ be the completion of $\CO_K$ at the unique place above $3$. We have
\[ \CO_{K,3}^\times/\BZ_3^\times(1+9\CO_{K,3})=\langle \omega_3\rangle\times\langle1+3\omega_3\rangle\cong \BZ/3\BZ\times\BZ/3\BZ,\]
where $\omega_3$ is the image of $\omega$ into $\CO_{K,3}^\times$. Under both the embeddings $\rho_1$ and $\rho_2$,  it is straightforward to verify that $\omega_3$ and $1+3\omega_3$ normalize $U_0$, and therefore they induce automorphisms of $X^0_\Gamma$. \begin{thm}\label{modular-action}
 Let  $P $ be an arbitrary point on $X_\Gamma^0$.
\begin{itemize}
 \item[1.] 
 Under the embedding $\rho_1$, we have
\[P^{1+3\omega_3}=[\omega]P,\textrm{ and } P^{\omega_3}=[\omega]P+(0,-4\sqrt{-3}).\]

\item[2.] Under the embedding $\rho_2$ and $\rho_3$, we have
\[P^{1+3\omega_3}=[\omega]P,\textrm{ and } P^{\omega_3}=[\omega^2]P+(0,-4\sqrt{-3}).\]
\end{itemize}
\end{thm}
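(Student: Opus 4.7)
The plan is to realize the right translation on $X_\Gamma^0$ by the adelic elements $\rho_i(\omega_3)$ and $\rho_i(1+3\omega_3)$ — both supported only at the prime $3$ — as the action of global matrices in $N_\Gamma$, whose effect on $X_\Gamma \simeq E_9$ is given by Proposition \ref{modular-curve}. Concretely, I want to find $\gamma_1,\gamma_2^{(i)}\in N_\Gamma$ (embedded diagonally in $\GL_2(\BA_f)$) such that
\begin{equation*}
\rho_i(1+3\omega_3)\in \BQ^\times\cdot\gamma_1\cdot U_0, \qquad \rho_i(\omega_3)\in \BQ^\times\cdot\gamma_2^{(i)}\cdot U_0,
\end{equation*}
so that the induced automorphism of $X_\Gamma^0$ equals $\Phi(\gamma_j)$. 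Guided by Proposition \ref{modular-curve}, the natural candidates are $\gamma_1=B^{-1}$ in all three cases (since $\Phi(B^{-1})(P)=[\omega]P$), $\gamma_2^{(1)}=C^{-1}$ under $\rho_1$ (one checks $\Phi(C^{-1})(P)=[\omega]P+(0,-4\sqrt{-3})$, using that $[\omega]$ fixes the $y$-coordinate of a point with $x=0$), and $\gamma_2^{(2)}=\gamma_2^{(3)}=B^2C^{-1}$ under $\rho_2,\rho_3$ (for which $\Phi(B^2C^{-1})(P)=[\omega^2]P+(0,-4\sqrt{-3})$).

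The first preparatory step is to verify that $\rho_i(\omega_3)$ and $\rho_i(1+3\omega_3)$ normalize $U_0$. Away from $3$ the adelic element is trivial; at the prime $3$ one computes directly that conjugation by the explicit local matrices preserves both the level structure $U_0(3^5)_3$ and the additional congruence $a\equiv d\pmod 3$ defining $V$, which follows from the shape of the entries. At each place $v\neq 3$, both $B$ and $C$ have entries in $\BZ[1/3]\subset\BZ_v$, so $\gamma_j\in\GL_2(\BZ_v)\subset U_0$, and the desired identity holds trivially at $v$.

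The heart of the argument is the $3$-adic verification: one shows that, at the place $3$, the product $\gamma_j^{-1}\cdot\rho_i(x)$ (for $x=1+3\omega$ or $x=\omega$) lies in $\BQ^\times\cdot V_3$. Computing this product explicitly, the entries are rational expressions in $p$ built from $p/9$, $27/p$, $(p+1)/9$, and (under $\rho_3$) $p/36$; the hypothesis $p\equiv 2,5\pmod 9$ is exactly what forces these entries to have $3$-adic integer shape, to give the correct valuation $\geq 5$ in the lower-left position, and to produce diagonal entries congruent modulo $3$. The case split between $\rho_1$ (giving the $[\omega]$ twist) and $\rho_{2,3}$ (giving the $[\omega^2]$ twist) emerges from this same computation: the different embeddings produce local matrices whose diagonals mod $3$ differ, forcing the extra $B^2$ factor in $\gamma_2^{(2,3)}$. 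The main obstacle will be precisely this $3$-adic bookkeeping, and in particular verifying that the two seemingly different embeddings $\rho_2$ and $\rho_3$ nevertheless land in the same double coset $\BQ^\times \gamma_2^{(2)} U_0$ at the place $3$. Once this is carried out, Proposition \ref{modular-curve} translates the identities $\rho_i(\cdot)\equiv \gamma_j \pmod{\BQ^\times U_0}$ directly into the claimed formulas on $E_9$.
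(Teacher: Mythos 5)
Your overall strategy is the same as the paper's: realize the right translation by $\rho_i(\omega_3)$ and $\rho_i(1+3\omega_3)$ as translation by a global matrix in $N_\Gamma$, and read off the action from Proposition \ref{modular-curve}. However, the translation rule you state is inverted, and as a consequence the candidate matrices $\gamma_j$ you propose do not actually satisfy the coset identities you would need to verify. Concretely, suppose $\rho_i(x)=q\gamma u$ with $q\in\BQ^\times$, $\gamma\in N_\Gamma$ embedded diagonally, and $u\in U_0$. For $P=[z,1]_{U_0}$ one has
\begin{equation*}
P^{\rho_i(x)}=[z,\,q\gamma_f u]_{U_0}=[z,\gamma_f]_{U_0}=[\gamma^{-1}z,1]_{U_0}=\Phi(\gamma^{-1})(P),
\end{equation*}
using the left $\GL_2(\BQ)^+$-action in the third step. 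So the induced automorphism is $\Phi(\gamma^{-1})$, not $\Phi(\gamma)$. Equivalently (and this is the form the paper uses): if $g\cdot\rho_i(x)\in U_0$ for a global $g$, then $P^{\rho_i(x)}=\Phi(g)(P)$; the paper's identity $A^2B^2\cdot\rho_1(1+3\omega_3)\in V$ thus yields $\Phi(A^2B^2)=\Phi(B^2)$ after discarding $A^2\in\Gamma$.

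Because of this inversion, your proposed $\gamma_1=B^{-1}$ is the inverse of the correct coset representative. From $A^2B^2\rho_1(1+3\omega_3)\in V$ one gets $\rho_1(1+3\omega_3)\in B^{-2}A^{-2}U_0=B^{-2}U_0=BU_0$ (using $A\in\Gamma\subset U_0$ and $B^3\in V\subset U_0$), and $BU_0\neq B^{-1}U_0$ since $B\notin U_0$ (indeed $\GL_2(\BQ)^+\cap U_0\subset\Gamma$ and $B\notin\Gamma$). Your two errors cancel in the final formula, because $\Phi(B^{-1})=\Phi(B^2)=[\omega]$, but the intermediate claim ``$\rho_1(1+3\omega_3)\in\BQ^\times B^{-1}U_0$'' is false, and the $3$-adic verification that you defer as ``the heart of the argument'' would fail if you tried to prove the cosets as you have set them up. The same remark applies to $\gamma_2^{(1)}=C^{-1}$ (should be $C$, i.e.\ the class $C^{-2}$, with action $\Phi(C^{-1})=\Phi(C^2)$) and to $\gamma_2^{(2,3)}$. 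Replace each $\gamma_j$ by its inverse, state the rule as $\rho_i(x)\in\BQ^\times\gamma U_0\Rightarrow P^{\rho_i(x)}=\Phi(\gamma^{-1})(P)$, and then the outline — the normalizer check, the reduction to the prime $3$, and the bookkeeping with $p/9$, $27/p$, $p/36$ — lines up with the paper's proof.
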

\begin{proof}
We give the proof of the first assertion in details and the case under the embedding $\rho_2$ is similar. Now suppose $K$ is embedded in $\M_2(\BQ)$ under $\rho_1$. Since $\omega_3$ and $1+3\omega_3$ have determinants $\equiv 1\mod 3$, as elements in $\Aut_\BQ(X_\Gamma^0)$, they lie in the subgroup $\Aut_K(X_\Gamma)$. See \cite[page 6]{HSY} for the structure of the automorphism groups.
Suppose $P=[z,1]$, $z\in \CH$, be a point on $X_\Gamma^0$. We have
\[A^2B^2(1+3\omega_3)=\left(\begin{pmatrix}783/p+9508&-2377p/3-145/3\\ 2268/p+27540&-2295p-140\end{pmatrix}_3, A^2B^2\right)\in V,\]
where the subscript $3$ denotes the $3$-adic component of the adelic matrices.
Then by Proposition \ref{modular-curve},
\[P^{1+3\omega_3}=\Phi(B^2)(P)=[\omega]P.\]
Similarly, if $p\equiv 2\mod 9$, then $AC^2\omega_3\in V,$
and hence
\[P^{\omega_3}=\Phi(C^2)(P)=[\omega]P+(0,-4\sqrt{-3}).\]
If $p\equiv 5\mod 9$, then $A^2C^2\omega_3\in V,$
and hence
\[P^{\omega_3}=\Phi(C^2)(P)=[\omega]P+(0,-4\sqrt{-3}).\]

For the case under embbedding $\rho_2$ and $\rho_3$, it is straight to verify that $A^2B^2(1+3\omega_3)\in V$ for any odd prime $p\equiv 2, 5\mod 9$, and $AB^2C^2\omega_3V,$ when $p\equiv 2\mod 9$, and $A^2B^2C^2\omega_3\in V$ when $p\equiv 5\mod 9$. Then the second assertion follows from Proposition \ref{modular-curve}.

\end{proof}

\subsection{Galois actions on Heegner points}
Fix the Artin reciprocity law $\sigma: \wh{K}^\times\ra \Gal(K^\ab/K)$ by sending local uniformizers to Frobenius automorphisms. Denote by $\sigma_t$ the image of $t\in\wh{K}^\times$.   Let $P_i=[\tau_i,1]_{U_0}$ be the CM points on $X_\Gamma^0$ for  $i=1,2,3$. In the following, when we consider the CM point $P_i$, we assume $K$ is embedded in $\M_2(\BQ)$ under $\rho_{i}$.
\begin{thm}\label{thm:Galois}
For $i=1,2$, the point $P_i\in X_\Gamma^0(H_{9p})$ satisfies
\[P_i^{\sigma_{1+3\omega_3}}=[\omega]P_i,\textrm{ and } P_i^{\sigma_{\omega_3}}=[\omega^i]P_i+(0,-4\sqrt{-3}).\]
Similarly, $P_3\in X_\Gamma^0(H_{36p})$ satiesfies
\[P_3^{\sigma_{1+3\omega_3}}=[\omega]P_3,\textrm{ and } P_3^{\sigma_{\omega_3}}=[\omega^2]P_3+(0,-4\sqrt{-3}).\] 
\end{thm}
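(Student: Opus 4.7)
The plan is to invoke the Shimura reciprocity law to reduce each assertion to the modular (right-translation) action on $X_\Gamma^0$ already computed in Theorem \ref{modular-action}. Concretely, for the CM point $P_i = [\tau_i, 1]_{U_0}$ with complex multiplication by $K$ via $\rho_i$, Shimura reciprocity gives
\[P_i^{\sigma_t} \;=\; [\tau_i,\,\rho_i(t)]_{U_0} \;=\; P_i^{\rho_i(t)}\]
for every $t \in \wh{K}^\times$, where on the right, $\rho_i(t) \in \GL_2(\BA_f)$ acts by right translation via the homomorphism $N_{\GL_2(\BA_f)}(U_0)/U_0 \to \Aut_\BQ(X_\Gamma^0)$ recalled in the preceding subsection.

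First I would verify the fields of definition. By complex multiplication theory, the stabilizer of $P_i$ in $\Gal(K^\ab/K)$ corresponds under the Artin map to the subgroup $\{t \in \wh{K}^\times : \rho_i(t) \in U_0\} \subset \wh{K}^\times$ (modulo $K^\times$). Using the identifications $\rho_i(K) \cap R_0(3^5) = \CO_{9p}$ for $i = 1, 2$ and $\rho_3(K) \cap R_0(3^5) = \CO_{36p}$ already recorded in the paper, together with a short check at the prime $3$ showing that the extra data defining $U_0$ (the congruence $a \equiv d \bmod 3$ in $V$ and the generators $W$ and $A$) does not alter the relevant intersection, one identifies this subgroup with $\wh{\CO}_{9p}^\times$ (resp.\ $\wh{\CO}_{36p}^\times$), yielding $P_1, P_2 \in X_\Gamma^0(H_{9p})$ and $P_3 \in X_\Gamma^0(H_{36p})$.

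For the explicit Galois formulas, I would take $t \in \wh{K}^\times$ to have component $1 + 3\omega_3$ (respectively $\omega_3$) at the unique place of $K$ above $3$ and trivial component elsewhere. Since $\rho_i(t)_\ell \in \GL_2(\BZ_\ell)$ for every prime $\ell \neq 3$, only the $3$-component contributes to the right-translation action on $X_\Gamma^0$, and Theorem \ref{modular-action} applies directly: under $\rho_1$ it gives $P_1^{\rho_1(1+3\omega_3)} = [\omega]P_1$ and $P_1^{\rho_1(\omega_3)} = [\omega]P_1 + (0, -4\sqrt{-3})$; under $\rho_2$ and $\rho_3$ it gives $P_i^{\rho_i(1+3\omega_3)} = [\omega]P_i$ and $P_i^{\rho_i(\omega_3)} = [\omega^2]P_i + (0, -4\sqrt{-3})$. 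Combining these with the Shimura reciprocity identity above yields the Galois formulas in the theorem verbatim.

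The principal obstacle is bookkeeping: carefully matching the sign and side conventions in Shimura reciprocity against the specific adelic representatives $A^a B^b (1 + 3\omega_3)$ and $A^a B^b C^c \omega_3$ that were shown in the proof of Theorem \ref{modular-action} to lie in $V$. Once this normalization is pinned down, the argument is a direct transcription of the modular-action equalities into Galois-theoretic language, with no further geometric input required.
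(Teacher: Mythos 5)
Your proof is correct and follows essentially the same route as the paper: invoke Shimura reciprocity to convert the Galois action $\sigma_t$ into the right-translation by $\rho_i(t)$ already computed in Theorem \ref{modular-action}, and determine the field of definition from $\wh{K}^\times\cap U_0=\wh{\CO_{9p}}^\times$ (resp.\ $\wh{\CO_{36p}}^\times$). The paper compresses the same steps into a two-line proof; your version merely spells out the localisation at $3$ and the check that passing from $R_0(3^5)$ to $U_0$ does not change the relevant intersection, details the paper leaves implicit.
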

\begin{proof}
By Shimura's reciprocity law \cite[Theorems 6.31 and  6.38]{Shimurabook}, we have
\[P_i^{\sigma_t}=P_i^t=[\tau_i, t],\quad t\in \wh{K}^\times.\]
Since $\wh{K}^\times\cap U_0=\wh{\CO_{9p}}^\times,$ by class field theory, we see $P_i$ is defined over the ring class field $H_{9p}$, and the Galois actions of $\sigma_{\omega_3}$ and $\sigma_{1+3\omega_3}$ are clear from Theorem \ref{modular-action}. The proof for $P_3$ is similar.
\end{proof}
\begin{remark}\label{remark1}
Since $\tau_3=p\omega/9(2\omega+4)=p\sqrt{-3}/54$, $e^{2\pi i\tau_2/n}$ is real. So $P_3$ is in fact a real point on $E_9$.
\end{remark}

\section{Nontriviality of Heegner points}
The elliptic curve $E_3$ has Weierstrass equation $y^2=x^3-2^43^5$. Consider the isomorphism
\[\phi:E_9\lra E_3,\quad (x,y)\mapsto (9x/\sqrt[3]{9}, 9y).\]
We have the following commutative diagram:
$$\xymatrix{E_9(H_{9p})^{\sigma_{1+3\omega_3}=\omega^2}\ar[d]^\phi\ar[rr]^{\Tr_{H_{9p}/L_{(3,p)}}}&&E_9(L_{(3,p)})^{\sigma_{1+3\omega_3}=\omega^2}\ar[d]^{\phi}\\
            E_3(H_{3p})\ar[rr]^{\Tr_{H_{3p}/L_{(p)}}}&&E_3(L_{(p)})}$$
where the field extension diagram is as follows ($H_m$ is the ring class field of $K$ with conductor $m$):
\[\xymatrix@R=11pt{&H_{9p}=H_{3p}(\sqrt[3]{3})\ar@{-}[dl]^{3}\ar@{-}[dr]\ar@{-}[dd]&\\
            H_{3p}\ar@{-}[dd]_{(p+1)/3}&&H_9\ar@{=}[dd]\\
            &L_{(3,p)}=K(\sqrt[3]{3},\sqrt[3]{p})\ar@{-}[dr]^{3}\ar@{-}[d]^{3}\ar@{-}[dl]_{3}&\\
            L_{(p)}=K(\sqrt[3]{p})\ar@{-}[dr]^{3}&L_{(3p)}=K(\sqrt[3]{3p})\ar@{-}[d]^{3}&L_{(3)}=K(\sqrt[3]{3})\ar@{-}[dl]_{3}\\
            &K\ar@{-}[d]^{2}&\\
            &\BQ.&\\
            }\]
The following proposition on related field extensions is partially quoted from \cite[Proposition 2.6]{SSY}.            
\begin{prop}\label{LCF}
Let $p\equiv 2,5\mod 9$ be odd primes.
\begin{itemize}
\item[1.] The field $H_{9p}=H_{3p}(\sqrt[3]{3})$ with Galois group $\Gal(H_{9p}/H_{3p})\simeq \langle \sigma_{1+3\omega_3}\rangle^{\BZ/3\BZ}$, and
\[\left(\sqrt[3]{3}\right)^{\sigma_{1+3\omega_3}-1}=\omega^2.\]
\item[2.] We have $\left(\sqrt[3]{3}\right)^{\sigma_{\omega_3}-1}=1$ and
\[\left(\sqrt[3]{p}\right)^{\sigma_{\omega_3}-1}=\left\{\begin{aligned} {\omega},&\quad p\equiv 2\mod 9; \\{\omega^2},&\quad p\equiv 5\mod 9.  \end{aligned}\right.\]
\item[3.]  $[H_{36p}:H_{9p}]=6$ and $H_{36p}=H_{9p}(\sqrt{-1},\sqrt[3]{2})$ with Galois group 
$$\Gal(H_{36p}/H_{9p})\simeq \langle \sigma_{1+2\omega_2}\rangle^{\BZ/2\BZ}\times\langle \sigma_{\omega_2}\rangle^{\BZ/3\BZ}.$$ 
\end{itemize}
\end{prop}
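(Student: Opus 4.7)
The plan is to identify each ring class field by class field theory---comparing ring class numbers and exhibiting explicit Kummer generators---and to compute the Galois actions via cubic and quadratic Hilbert symbols, which is feasible because $K$ already contains $\omega$ and the primes involved ($2$, $3$, $p$) have easily described local behavior in $K$.

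For parts (1) and (2), which are essentially contained in \cite[Proposition 2.6]{SSY}: the ring class number formula $h_f = f\cdot [\CO_K^\times:\CO_f^\times]^{-1}\prod_{q\mid f}(1-(d_K/q)/q)$ gives $h_9 = 3$, $h_{3p} = p+1$, and $h_{9p} = 3(p+1)$ (using $h_K=1$, $[\CO_K^\times:\CO_f^\times]=3$ for $f>1$, and that $p$ is inert in $K$). Thus $[H_9:K] = 3$, and $H_9 = K(\sqrt[3]{3})$ follows since this is the unique degree-$3$ abelian extension of $K$ ramified only above $3$ with the correct conductor. Similarly $L_{(p)} = K(\sqrt[3]{p}) \subset H_{3p}$, so by dimension count $H_{9p} = H_{3p}(\sqrt[3]{3})$. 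The action of $\sigma_t$ on a Kummer generator $\sqrt[3]{a}$ is given by the cubic Hilbert symbol $(t,a)_K$, which reduces to local computations at $(\sqrt{-3})$; the dichotomy between $\omega$ and $\omega^2$ for $\sigma_{\omega_3}$ acting on $\sqrt[3]{p}$ according to $p\equiv 2$ or $5 \mod 9$ comes directly from the class of $p$ in $(\CO_K/9)^\times$.

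For part (3), the same formula (using that $2$ is inert) gives $h_{36p} = 18(p+1)$ and hence $[H_{36p}:H_{9p}] = 6$. To realize $H_{36p}$ explicitly, I first note that $K(\sqrt{-1}) = \BQ(\zeta_{12})$ is quadratic over $K$ and ramified precisely at $(2)$, hence is the ring class field $H_4 \subset H_{36p}$. Next, $K(\sqrt[3]{2})$ is a cyclic cubic extension of $K$, and one shows its conductor divides $(36)$, so $K(\sqrt[3]{2}) \subset H_{36} \subset H_{36p}$. The composite $K(\sqrt{-1},\sqrt[3]{2})$ has degree $6$ over $K$ and is linearly disjoint from $H_{9p}/K$ (its conductor support is $\{(2),(\sqrt{-3})\}$, whereas $H_{9p}/H_9$ is ramified over $p$), so by degree count $H_{36p} = H_{9p}(\sqrt{-1}, \sqrt[3]{2})$. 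Finally, the action of $\sigma_{1+2\omega_2}$ is the nontrivial element of $\Gal(K(\sqrt{-1})/K)$ via local class field theory at $(2)$, corresponding to the order-$2$ factor; the action of $\sigma_{\omega_2}$ on $\sqrt[3]{2}$ is given by the cubic Hilbert symbol at $(2)$, which is nontrivial because $\omega_2$ has nontrivial image in the residue field $\BF_4^\times$.

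The main obstacle is the conductor calculation for $K(\sqrt[3]{2})/K$ at $(\sqrt{-3})$: since $3$ ramifies in $K$, the local extension at $(\sqrt{-3})$ involves wild ramification at a prime of residue characteristic equal to the degree, and one must show by an explicit computation in the unit filtration of $K_{(\sqrt{-3})}^\times$ that $2 \in (K_{(\sqrt{-3})}^\times)^3 \cdot (1 + (\sqrt{-3})^4 \CO_{K,(\sqrt{-3})})$, which bounds the conductor exponent at $(\sqrt{-3})$ by $4$. Concretely, I would write $2 = u^3(1+c(\sqrt{-3})^k)$ for a unit $u$ and verify $k\geq 4$ by direct approximation. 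Once this local bound is established, everything else is a mechanical consequence of class field theory, Kummer theory, and the Hilbert symbol formalism.
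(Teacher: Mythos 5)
Your proposal is in the same spirit as the paper's proof---class field theory for the field identifications, Kummer theory and Hilbert symbols for the Galois action---but the details diverge on two points, one of which is a genuine error in your outline.

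First, a difference in emphasis: the paper identifies $\Gal(H_{36p}/H_{9p})$ directly from the adelic description as $\CO_{K,2}^\times/\BZ_2^\times(1+4\CO_{K,2})$, which at once gives the order $6$ and the generators $\omega_2,\ 1+2\omega_2$, while you deduce the degree from the ring class number formula; both are fine. For the Galois action on $\sqrt{-1}$, your proposal leans on ``local class field theory at $(2)$'' without noticing that $(1+2\omega_2,-1)^{(2)}_{K_2}$ is a \emph{wild} quadratic Hilbert symbol (residue characteristic $2$, both entries units). The paper avoids this by observing that $1+2\omega=\sqrt{-3}$, so the product formula moves the computation to the place $v$ above $3$ where the symbol is tame: $\bigl(\sqrt{-1}\bigr)^{\sigma_{1+2\omega_2}-1}=\hilbert{1+2\omega_v}{-1}{K_v}{2}^{-1}=-1$. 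This is the slick point your outline misses, and as stated your ``via local class field theory'' step would require an honest dyadic unit-filtration computation.

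Second, and more importantly, your proposed verification of the conductor bound for $K(\sqrt[3]{2})/K$ at $v=(\sqrt{-3})$ is wrong as stated. You propose to ``show that $2 \in (K_v^\times)^3\cdot(1+(\sqrt{-3})^4\CO_{K,v})$.'' But $1+\mathfrak{p}_v^4 = (1+\mathfrak{p}_v)^3 \subset (K_v^\times)^3$ (since $e=2$, $p=3$, so the cube map identifies $1+\mathfrak{p}_v^2$ with $1+\mathfrak{p}_v^4$ and kills $\mu_3\subset 1+\mathfrak{p}_v$); hence $(K_v^\times)^3(1+\mathfrak{p}_v^4)=(K_v^\times)^3$, and your condition would force $2$ to be a cube in $K_v^\times$ and $K_v(\sqrt[3]{2})/K_v$ to be trivial --- which it is not (one checks $\pm 2\not\equiv 1\pmod{9}$, so $2\notin\mu_2(1+\mathfrak{p}_v^4)=(\CO_{K_v}^\times)^3$). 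The correct and shorter argument for the bound you actually need is precisely the inclusion $1+\mathfrak{p}_v^4\subset(K_v^\times)^3$: then every $t\in 1+\mathfrak{p}_v^4$ is a cube, so the Kummer character $t\mapsto(t,2)^{(3)}_{K_v}$ is automatically trivial there, giving conductor exponent $\le 4$ at $v$ with no computation on $2$ itself. You should also note (to justify $K(\sqrt[3]{2})\subset H_{36}$ rather than merely in a ray class field) that $K(\sqrt[3]{2})/\BQ$ is $S_3$-Galois, hence dihedral over $K$, so the corresponding character is automatically trivial on $\BZ_\ell^\times$ for all $\ell$ and factors through a ring class group. With these two corrections your argument goes through and reaches the same conclusion as the paper.
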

\begin{proof}
$(1)$ and $(2)$ are contained in \cite[Proposition 2.6]{SSY}. We just need to prove $(3)$, but the argument is similar to the proof of $(1)$. Note that $\CO_m=\BZ+m\CO_K$,
\[\Gal(H_{36p}/H_{9p})\simeq K^\times \wh{\CO_{9p}}^\times/K^\times\wh{\CO_{36p}}^\times\simeq \wh{\CO_{9p}}^\times/(\wh{\CO_{9p}}^\times\cap K^\times\wh{\CO_{36p}}^\times)\simeq \CO_{K,2}^\times/ \BZ_2^\times(1+4\CO_{K,2})\]
is of order $6$ and generated by $\omega_2$ and $1+2\omega_2$. The ideal $\sqrt{-3}\CO_K=(1+2\omega)$ and let $v$ be the place corresponding to the prime ideal $(1+3\omega)$. Then by the local-global principle, we have
\[\left(\sqrt{-1}\right)^{\sigma_{1+2\omega_2}-1}=\hilbert{1+2\omega_2}{-1}{K_2}{2}=\hilbert{1+2\omega_v}{-1}{K_v}{2}^{-1}=(-1)^{-(3-1)/2}\mod (1+2\omega)=-1,\]
where $\hilbert{\cdot}{\cdot}{K_w}{2}$ denotes the second Hilbert symbol over $K_w$. Similarly,
\[\left(\sqrt[3]{2}\right)^{\sigma_{\omega_2}-1}=\hilbert{\omega_2}{2}{K_2}{3}=\omega^{-(4-1)/3}\mod 2=\omega^2.\]
\end{proof}

By Proposition \ref{LCF} and \ref{thm:Galois}, $\phi(P_1),  \phi(P_2)\in E_3(H_{3p})$. Let 
\begin{equation}\label{Heegnerpoint}
z_1=\Tr_{H_{3p}/L_{(p)}}\phi(P_1),\  \mathrm{and}\  z_2=\Tr_{H_{3p}/L_{(p)}}\phi(P_2),
\end{equation}
then $z_1,z_2\in E_3(L_{(p)})$.
\begin{thm}\label{nontrivial}
Both $z_1$ and $z_2$ are nontorsion.
\end{thm}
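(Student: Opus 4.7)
The plan is to compute the action of $\sigma_{\omega_3}$ on each $z_i$ explicitly, and derive a contradiction from the assumption that $z_i$ is torsion. The formula I expect has the shape $z_i^{\sigma_{\omega_3}}=[\omega^i]z_i+c_i$ for a nonzero constant $c_i\in E_3[\sqrt{-3}]$, from which torsion of $z_i$ would quickly force $[1-\omega^i]z_i$ to be simultaneously zero and nonzero.

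First I would verify $\phi(P_i)\in E_3(H_{3p})$, so that the trace in \eqref{Heegnerpoint} indeed lands in $E_3(L_{(p)})$. The map $\phi$ is defined over $\BQ(\sqrt[3]{3})$, and Proposition~\ref{LCF}(1) gives $\sigma_{1+3\omega_3}(\sqrt[3]{9})=\omega\sqrt[3]{9}$, so a direct unpacking shows $\phi^{\sigma_{1+3\omega_3}}=[\omega^2]\circ\phi$. Combining this with $P_i^{\sigma_{1+3\omega_3}}=[\omega]P_i$ from Theorem~\ref{thm:Galois} and the fact that $\phi$ commutes with the $\CO_K$-action yields
\[\phi(P_i)^{\sigma_{1+3\omega_3}}=[\omega^2]\phi([\omega]P_i)=[\omega^3]\phi(P_i)=\phi(P_i).\]
Since $\Gal(H_{9p}/H_{3p})=\langle\sigma_{1+3\omega_3}\rangle$, this places $\phi(P_i)$ in $E_3(H_{3p})$.

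Next, by Proposition~\ref{LCF}(2) the element $\sigma_{\omega_3}$ fixes $\sqrt[3]{3}$, so $\phi^{\sigma_{\omega_3}}=\phi$, and combining Theorem~\ref{thm:Galois} with the identity $\phi(0,-4\sqrt{-3})=(0,-36\sqrt{-3})$ gives
\[\phi(P_i)^{\sigma_{\omega_3}}=[\omega^i]\phi(P_i)+(0,-36\sqrt{-3}).\]
Since $\Gal(H_{9p}/K)$ is abelian, $\sigma_{\omega_3}$ commutes with $\Gal(H_{3p}/L_{(p)})$, and $[H_{3p}:L_{(p)}]=(p+1)/3$ from the extension diagram, so taking traces yields
\[z_i^{\sigma_{\omega_3}}=[\omega^i]z_i+\tfrac{p+1}{3}(0,-36\sqrt{-3}).\]
The hypothesis $p\equiv 2,5\pmod{9}$ ensures $(p+1)/3\not\equiv 0\pmod{3}$; since $(0,-36\sqrt{-3})$ has exact order $3$, the constant $c_i=\tfrac{p+1}{3}(0,-36\sqrt{-3})$ is a nontrivial element of $E_3[\sqrt{-3}]$.

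To finish, I would establish $E_3(L_{(p)})_{\tor}=E_3[\sqrt{-3}]=\{O,(0,\pm36\sqrt{-3})\}$, extending \cite[Proposition~2.3]{SSY}. If $z_i$ were torsion, then $z_i\in E_3(K)$ would be fixed by $\sigma_{\omega_3}$, forcing $[1-\omega^i]z_i=-c_i\neq O$. On the other hand $N(1-\omega^i)=3$ makes $1-\omega^i$ a unit multiple of $\sqrt{-3}$ in $\CO_K$, so $[1-\omega^i]$ annihilates $E_3[\sqrt{-3}]$ and forces $[1-\omega^i]z_i=O$, a contradiction. The main obstacle is the torsion computation $E_3(L_{(p)})_{\tor}=E_3[\sqrt{-3}]$: a priori the cubic extension $L_{(p)}/K$ could introduce new torsion, and ruling this out seems to require either a reduction modulo a suitably split prime or a CM-specific Galois argument on the $3$-primary part.
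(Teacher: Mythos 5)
Your proof is essentially the paper's own argument, made explicit: the paper derives the same relation $z_i^{\sigma_{\omega_3}}=[\omega^i]z_i+\frac{p+1}{3}(0,\pm36\sqrt{-3})$ from Theorem~\ref{thm:Galois}, reads off $E_3(L_{(p)})_\tor=\{O,(0,\pm36\sqrt{-3})\}$ from \cite[Proposition 2.3]{SSY}, and observes the three torsion candidates cannot satisfy that relation. The ``main obstacle'' you flag at the end is precisely what that cited proposition furnishes (after transporting torsion from $E_1$ through the isomorphism $(x,y)\mapsto(\sqrt[3]{9}x,3y)$, defined over $\BQ(\sqrt[3]{3})\subset L_{(p)}$), so no separate extension is needed.
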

\begin{proof}
By Theorem \ref{thm:Galois}, 
\begin{equation}\label{action}
z_i^{\sigma_{\omega_3}}=[\omega^i]z_i+\frac{p+1}{3}(0, 36\sqrt{-3})
\end{equation}
for $i=1,2$. By \cite[Proposition 2.3]{SSY}, the torsion points in $E_3(L_{(p)})$ are $O,(0,\pm 36\sqrt{-3})$ which can not satisfy (\ref{action}).
\end{proof}
Let $\phi_p:E_3\ra E_{3p}$ and $\phi_{p^2}:E_3\ra E_{3p^2}$ be the map given by $(x,y)\mapsto (\sqrt[3]{p}x,py)$ and $(x,y)\mapsto (\sqrt[3]{p^2}x,p^2y)$. Set 
\[y_i=[\sqrt{-3}]z_i\in E_3(L_{(p)}).\]
By (\ref{action}), we know that $(y_i)^{\sigma_{\omega_3}}=[\omega^i](y_i)$. 
\begin{proof}[Proof of Theorem \ref{main}]

 By Proposition \ref{LCF} and Theorem \ref{nontrivial}, if $p\equiv 2\mod 9$, then $\phi_p(y_2)$ is a nontorsion point in $E_{3p}(K)$ and $\phi_{p^2}(y_1)$ is a nontorsion point in $E_{3p^2}(K)$; if $p\equiv 5\mod 9$, then $\phi_p(y_1)$ is a nontorsion point in $E_{3p}(K)$ and $\phi_{p^2}(y_2)$ is a nontorsion point in $E_{3p^2}(K)$. Since $E_{3p}(\BQ)$ has the same rank with $E_{3p}(K)$ and $E_{3p^2}(\BQ)$ has the same rank with $E_{3p^2}(K)$, we finish the proof.
\end{proof}
Define
\[E_3(L_{(p)})^{\sigma_{\omega_3}=[\omega^i]}:=\{P\in E_3(L_{(p)}):   P^{\sigma_{\omega_3}}=[\omega^i]P\}.\]
\begin{thm}\label{non-divisibility1}
The point $y_i$ are not divisible by $\sqrt{-3}$ in $E_3(L_{(p)})^{\sigma_{\omega_3}=[\omega^i]}$.
\end{thm}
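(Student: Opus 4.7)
The plan is to argue by contradiction. Suppose there exists $Q\in E_3(L_{(p)})$ with $Q^{\sigma_{\omega_3}}=[\omega^i]Q$ and $[\sqrt{-3}]Q=y_i=[\sqrt{-3}]z_i$. Then $z_i-Q\in \ker[\sqrt{-3}]=E_3[\sqrt{-3}]$, and by \cite[Proposition 2.3]{SSY} this kernel equals $\{O,(0,\pm 36\sqrt{-3})\}\subset E_3(K)$. Write $z_i=Q+T$ with $T\in E_3[\sqrt{-3}]$. The goal is to translate the existence of $Q$ into a constraint on $T$ via the Galois action formula (\ref{action}), and then to derive a numerical contradiction from $p\equiv 2,5\mod 9$.

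The key computation will be to apply $\sigma_{\omega_3}$ to the decomposition $z_i=Q+T$. Since $\sigma_{\omega_3}$ lies in $\Gal(K^{\ab}/K)$ by construction and $T\in E_3(K)$, we have $T^{\sigma_{\omega_3}}=T$, so
\[
z_i^{\sigma_{\omega_3}}=Q^{\sigma_{\omega_3}}+T=[\omega^i]Q+T.
\]
Comparing with (\ref{action}) and substituting $z_i=Q+T$ on its right-hand side, I will obtain
\[
[1-\omega^i]T=\frac{p+1}{3}(0, 36\sqrt{-3}) \quad \text{in } E_3(K).
\]

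To close the argument, I will observe that $N_{K/\BQ}(1-\omega^i)=3$ for $i=1,2$, so $1-\omega^i$ is a unit multiple of $\sqrt{-3}$ in $\CO_K$; hence $[1-\omega^i]$ annihilates $E_3[\sqrt{-3}]$ and the left-hand side above is $O$. On the other hand, for $p\equiv 2\mod 9$ one has $\tfrac{p+1}{3}\equiv 1\mod 3$, and for $p\equiv 5\mod 9$ one has $\tfrac{p+1}{3}\equiv 2\mod 3$, so the right-hand side is the nontrivial $3$-torsion point $\pm(0, 36\sqrt{-3})\neq O$. This is the desired contradiction. There is no real obstacle: the only subtle point is that $\sigma_{\omega_3}$ acts trivially on $K$-rational torsion, which is automatic from Artin reciprocity targeting $\Gal(K^{\ab}/K)$; the rest is a direct comparison of the two expressions for $z_i^{\sigma_{\omega_3}}$ together with the elementary congruence $\tfrac{p+1}{3}\not\equiv 0\mod 3$.
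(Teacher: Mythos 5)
Your proof is correct and takes essentially the same route as the paper: from $[\sqrt{-3}]Q=[\sqrt{-3}]z_i$ you deduce $T:=z_i-Q\in E_3[\sqrt{-3}]$, apply $\sigma_{\omega_3}$ to $z_i=Q+T$, and contrast $[1-\omega^i]T=O$ (since $1-\omega^i$ is a unit times $\sqrt{-3}$ in $\CO_K$) with the nonzero torsion point $\tfrac{p+1}{3}(0,36\sqrt{-3})$. One small remark: the paper's notion of divisibility also allows a torsion summand, i.e.\ $y_i=[\sqrt{-3}]Q+T'$ with $T'$ torsion; in that case one first observes $[\sqrt{-3}](z_i-Q)=T'$ is torsion, hence $z_i-Q$ is torsion and so lies in $E_3[\sqrt{-3}]$, after which your computation applies verbatim, so this is a cosmetic omission rather than a substantive gap.
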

\begin{proof}
Assume the contrary that $y_i=\sqrt{-3}Q+T$ with 
\[Q\in E_3(L_{(p)})^{\sigma_{\omega_3}=[\omega^i]} \textrm{ and } T\in E_3(L_{(p)})^{\sigma_{\omega_3}=[\omega^i]}_\tor.\]
Then
\[\sqrt{-3}(z_i-Q)=T.\]
Since $E_3(L_{(p)})_\tor=E_3[\sqrt{-3}]$, we conclude that $z_i-Q\in E_3[\sqrt{-3}]$. Suppose $z_i=Q+R$ with $R\in E_3[\sqrt{-3}]$. Taking Galois action of $\sigma_{\omega_3}$, we obtain 
$$0=[1-\omega^i]R=(0,\pm 36\sqrt{-3}),$$
which is a contradiction.
\end{proof}
\begin{remark}
As is seen  if $p\equiv 2\mod 9$ resp. $p\equiv 5\mod 9$, $y_1$ may be identified with a point of infinite order in $E_{3p}(K)$ resp. $E_{3p^2}(K)$; and  if $p\equiv 2\mod 9$ resp. $p\equiv 5\mod 9,$ $y_2$ may be identified with a point of infinite order in $E_{3p^2}(K)$ resp. $E_{3p}(K)$.
\end{remark}

\section{The explicit Gross-Zagier formulae}\label{E-G-Z}
In the rest of the paper we clarify the embedding of $K$ into $\M_2(\BQ)$ as follows. As indicated in the proof of Theorem \ref{main}, in the case $p\equiv 2 \mod 9$ and $\chi=\chi_{3p^2}$ or the case $p\equiv 5\mod 9$ and $\chi=\chi_{3p}$, we use the Heegner point $ z_1$ to construct nontrivial points on elliptic curves, and hence we embed $K$ into $\M_2(\BQ)$ under $\rho_1$.  Otherwise, in the case $p\equiv 2 \mod 9$ and $\chi=\chi_{3p}$ or the case $p\equiv 5\mod 9$ and $\chi=\chi_{3p^2}$, we use the Heegner point $ z_2$, and we embed $K$ into $\M_2(\BQ)$ under $\rho_2$. 
\subsection{The explicit Gross-Zagier formulae}
Let $\pi$ be the automorphic representation of $\GL_2(\BA)$ corresponding to ${E_9}_{/\BQ}$. Then $\pi$ is only ramified at $3$ with conductor $3^5$. For $n\in \BQ^\times$, let $\chi_n: \Gal(K^{\ab}/K)\rightarrow\BC^\times$
be the cubic character given by $\chi_n(\sigma)=(\sqrt[3]{n})^{\sigma-1}$. Define
\[L(s,E_9,\chi_n):=L(s-1/2,\pi_K\otimes \chi_n),\quad \epsilon(E_9,\chi_n):=\epsilon(1/2,\pi_K\otimes \chi_n),\]
where $\pi_K$ is the base change of $\pi$ to $\GL_2(\BA_K)$.

Let $p\equiv 2,5\mod 9$ be an odd prime number, and put $\chi=\chi_{3p}$ resp. $\chi_{3p^2}$. From the Artin formalism, we have
$$L(s,E_9,\chi)=L(s,E_p)L(s,E_{3p^2}) \text{ resp. } L(s,E_{p^2})L(s,E_{3p}).$$
By \cite{Liverance}, we have  the epsilon factors $\epsilon(E_{3p^2})(\text{ resp. } \epsilon(E_{3p}))=-1$ and $\epsilon(E_p)(\text{ resp. } \epsilon(E_{p^2}))=+1$, and hence the epsilon factor 
\[\epsilon(E_9,\chi)=\epsilon(E_{p})\epsilon(E_{3p^2})\text{ resp. }\epsilon(E_{p^2})\epsilon(E_{3p})=-1.\] 
For a quaternion algebra $\BB_{\BA}$, we define its ramification index $\epsilon(\BB_v)=+1$ for any place $v$ of $\BQ$ if the local component $\BB_v$ is split and $\epsilon(\BB_v)=-1$ otherwise. The following proposition guarentees we are in the same setting as in \cite[Theorem 4.3]{HSY}. 
\begin{prop}\label{Tun-Saito}
The incoherent quaternion algebra $\BB$ over $\BA$, which satisfies
$$\epsilon(1/2,\pi_{K,v}\otimes\chi_v)=\chi_{v}(-1)\epsilon_v(\BB)$$
for all places $v$ of $\BQ$, is only ramified at the infinity place.
\end{prop}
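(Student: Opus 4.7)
The strategy is to verify the identity $\epsilon_v(\BB) = \chi_v(-1)\,\epsilon(1/2,\pi_{K,v}\otimes \chi_v)$ place by place, and show the right-hand side equals $+1$ at every finite $v$ and $-1$ at $v = \infty$. A useful preliminary observation is that $\chi$ is a cubic character, so $\chi_v(-1)$ satisfies both $\chi_v(-1)^3=1$ and $\chi_v(-1)^2=1$, hence $\chi_v(-1)=1$ everywhere; the task therefore reduces to computing $\epsilon(1/2,\pi_{K,v}\otimes \chi_v)$. The product of these local factors is the global root number $\epsilon(E_9,\chi)=-1$, which will serve as a consistency check.

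The archimedean and tame unramified places are immediate. At $v=\infty$, $K_\infty=\BC$ and $\pi_{K,\infty}$ is the principal series on $\GL_2(\BC)$ base-changed from the weight-$2$ holomorphic discrete series attached to $E_9$; $\chi_\infty$ is trivial, and the local epsilon factor equals $-1$ as is standard for such base changes. For every finite prime $v\nmid 3p$, both $\pi_v$ and $\chi_v$ are unramified, so the local epsilon factor equals $+1$.

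At $v=p$, the congruence $p\equiv 2\pmod 3$ (implied by $p\equiv 2,5\pmod 9$) ensures that $p$ is inert in $K$, so $K_p$ is the unramified quadratic extension of $\BQ_p$; $\pi_p$ is unramified with unramified central character, while $\chi_p$ is a cubic character of $K_p^\times$ of conductor the unique prime above $p$. Writing $\pi_{K,p}=\pi(\mu_1,\mu_2)$ with $\mu_1,\mu_2$ unramified characters of $K_p^\times$, the factorization
\[
\epsilon(1/2,\pi_{K,p}\otimes\chi_p) \;=\; \epsilon(1/2,\mu_1\chi_p)\,\epsilon(1/2,\mu_2\chi_p)
\]
reduces the computation to Tate's local epsilon factors of tamely ramified characters, which are cubic Gauss sums of absolute value one. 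A short calculation using $\mu_1\mu_2(\varpi_p)=1$ (trivial central character at $p$) and the cubic order of $\chi_p$ shows that the product equals $+1$.

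The main obstacle is $v=3$, where $\pi_3$ (of conductor $3^5$) and $\chi_3$ are both highly ramified, and $3$ itself is ramified in $K$. The plan is to exploit the CM structure of $E_9$: since $E_9$ has CM by $K$, the representation $\pi$ is automorphically induced from a Hecke character $\psi$ of $K$, so after base change $\pi_{K,3}\simeq \psi_3\boxplus \psi_3^c$ (with $c$ denoting $\Gal(K/\BQ)$-conjugation acting on the two embeddings above $3$), and the epsilon factor decomposes as $\epsilon(1/2,\psi_3\chi_3)\,\epsilon(1/2,\psi_3^c\chi_3)$. The conductor and restriction to $\BQ_3^\times$ of $\psi_3$ are determined by the explicit CM structure of $E_9$ at the unique prime above $3$, and the conductor of $\chi_3$ is determined by Kummer theory applied to $K(\sqrt[3]{3p})$ or $K(\sqrt[3]{3p^2})$. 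Applying Tate's local functional equation together with the compatibility $\psi_3\psi_3^c=\omega_{\pi,3}\circ N_{K_3/\BQ_3}$ yields $\epsilon(1/2,\pi_{K,3}\otimes\chi_3)=+1$, finishing the verification.
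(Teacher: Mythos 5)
Your proposal diverges from the paper precisely at the hard place $v=3$, and that is where it has a real gap. The paper does \emph{not} compute $\epsilon(1/2,\pi_{K,3}\otimes\chi_3)$ directly. It first shows (via \cite[Propositions 6.3 and 6.5]{Gross88}) that the local factor is $+1$ at all finite $v\neq 3$ and $-1$ at $v=\infty$, then invokes the \emph{global} root number $\epsilon(1/2,\pi_K\otimes\chi)=-1$, already known from the factorization $L(s,E_9,\chi)=L(s,E_p)L(s,E_{3p^2})$ (resp.\ $L(s,E_{p^2})L(s,E_{3p})$) and the sign computations of \cite{Liverance}. The product formula then forces $\epsilon(1/2,\pi_{K,3}\otimes\chi_3)=+1$ with no local computation at $3$ whatsoever. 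You instead relegate the global sign to a ``consistency check'' and try to compute the factor at $3$ head-on via the CM decomposition $\pi_{K,3}\simeq\psi_3\boxplus\psi_3^c$. That decomposition is correct, but the final step --- ``Applying Tate's local functional equation together with $\psi_3\psi_3^c=\omega_{\pi,3}\circ N_{K_3/\BQ_3}$ yields $\epsilon(1/2,\pi_{K,3}\otimes\chi_3)=+1$'' --- is not a proof: with $c(\psi_3\chi_3)$ and $c(\psi_3^c\chi_3)$ both large and $3$ ramified in $K$, this is an explicit Gauss-sum evaluation of the sort carried out at length in the paper's Section 4.2, and nothing in the functional equation or that central-character compatibility by itself pins the sign to $+1$. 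In short, you have replaced the paper's one free ingredient (the known global sign) with a genuinely hard local calculation, and then skipped the calculation.

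The rest of your argument is essentially the paper's, just with the citations replaced by sketches. Your observation that $\chi_v(-1)=1$ because $\chi$ is cubic is exactly the paper's last step. Your treatment of $v=\infty$ and of $v\nmid 3p$ matches Gross \cite[Proposition 6.5]{Gross88} and the unramified case, respectively. At $v=p$ you propose a hands-on computation with the unramified principal series $\pi(\mu_1,\mu_2)$ and cubic Gauss sums; this would work (and is a mild generalization of what Gross' Proposition 6.3 encapsulates), though you should say explicitly why the product of the two Tate $\epsilon$-factors is $+1$ --- the point is that $\chi_p$ and $\chi_p^{-1}=\chi_p^{c}$ give complex-conjugate Gauss sums, $\mu_1\mu_2$ is unramified and trivial on $\varpi_p$, and the two Gauss sums multiply to a positive real of absolute value one. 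To repair the proposal with minimal effort, drop the direct attack at $v=3$, promote your ``consistency check'' to the actual argument, and cite the global root number $-1$ (from \cite{Liverance} via the $L$-function factorization) together with the product formula to conclude $\epsilon(1/2,\pi_{K,3}\otimes\chi_3)=+1$.
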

\begin{proof} Since $\pi$ is unramified at finite places $v\nmid 3$, $\chi$ is unramified at finite places $v\nmid 3p$ and $p$ is inert in $K$, by \cite[Proposition 6.3]{Gross88} we get $\epsilon(1/2,\pi_{K,v}\otimes\chi_v)=+1$ for all  finite $v\neq 3$. Again by \cite[Proposition 6.5]{Gross88}, we also know that $\epsilon(1/2,\pi_{K,\infty}\otimes\chi_\infty)=-1$. Since $\epsilon(1/2,\pi_K\otimes\chi)=-1$, we see that $\epsilon(1/2,\pi_{K,3}\otimes\chi_3)=+1$. Since $\chi$ is a cubic character, $\chi_v(-1)=1$ for any $v$. Hence $\BB$ is only ramified at the infinity place.
\end{proof}

Recall we have defined the Heegner points $z_1,z_2$ in (\ref{Heegnerpoint}). We also define 
\begin{equation}\label{z3}z_3=\Tr_{H_{3p}/L_{(p)}}\phi(\Tr_{H_{36p}/H_{9p}}P_3).\end{equation}
Since we use the same elliptic curve $E_9$ as in \cite[Theorem 4.3]{HSY}, very little modification of the proof gives us the following  explicit Gross-Zagier formulae once we verity the explicit local computation of toric integrals in Corollary \ref{ration}.
\begin{thm} \label{thm:GZ}
One has the following explicit formulae of Heegner points
\[\frac{L(1,E_{p})L'(1,E_{3p^2})}{\Omega_{p}\Omega_{3p^2}}=\left\{\begin{aligned} 2^{-1}\cdot 9 \cdot \wh{h}_\BQ(z_2),&\text{ if }p\equiv 2\mod 9;\\9\cdot\wh{h}_\BQ(z_1),&\text{ if }p\equiv 5\mod 9.\end{aligned}\right.\]
And
\[\frac{L(1,E_{p^2})L'(1,E_{3p})}{\Omega_{p^2}\Omega_{3p}}=\left\{\begin{aligned} 9 \cdot \wh{h}_\BQ(z_1),&\text{ if }p\equiv 2\mod 9;\\2^{-1}\cdot 9\cdot\wh{h}_\BQ(z_2),&\text{ if }p\equiv 5\mod 9.\end{aligned}\right.\]
\end{thm}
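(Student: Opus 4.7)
The plan is to invoke the explicit Gross--Zagier formula of Yuan--Zhang--Zhang in the framework already set up in \cite[Theorem 4.3]{HSY}. By Proposition \ref{Tun-Saito} the incoherent quaternion algebra $\BB$ is split at every finite place and ramified only at $\infty$, so the relevant Shimura curve is just the modular curve $X_\Gamma$, which by Proposition \ref{modular-curve} is isomorphic to $E_9$ over $\BQ$. The general formula therefore takes the shape
\[
\frac{L'(1/2,\pi_K\otimes\chi)}{(\text{global archimedean and periodic constants})}
= \Bigl(\prod_v \alpha_v\Bigr)\cdot \wh{h}_\BQ(P_\chi),
\]
where $P_\chi$ is the $\chi$-isotypic Heegner divisor on $E_9$ produced from the CM point $P_i$ under the embedding $\rho_i$, and the $\alpha_v$ are the local toric integrals for a choice of test vectors.

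First I would identify $P_\chi$ with the Heegner points constructed in Section 3. Via the isomorphism $\phi : E_9 \to E_3$ and the trace down to $L_{(p)}$, Theorem \ref{thm:Galois} together with Proposition \ref{LCF} shows that the $\chi$-component of the CM Heegner divisor is, up to the cubic-twist isomorphism $\phi_p$ or $\phi_{p^2}$, exactly the point $z_i$. Thus $\wh{h}_\BQ(P_\chi)$ may be rewritten as $\wh{h}_\BQ(z_i)$, with $i$ dictated by the case analysis on $p \bmod 9$ and on $\chi\in\{\chi_{3p},\chi_{3p^2}\}$ exactly as in the embedding conventions recorded at the start of Section \ref{E-G-Z}.

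Next I would analyse the local toric integrals $\alpha_v$ place by place. At finite places $v\nmid 3p$, both $\pi_v$ and $\chi_v$ are unramified and the spherical calculation is standard. At $v=p$, $\pi_p$ is unramified while $\chi_p$ is tamely ramified (using that $p$ is inert in $K$), and the computation from \cite{HSY} applies verbatim. At $v=\infty$ the holomorphic weight-$2$ archimedean integral produces the expected $\Omega$-periods. The only genuinely new local calculation is at $v=3$, where $\pi_3$ has conductor $3^5$; this is precisely what Corollary \ref{ration} (proved later in the paper) supplies. Finally, from the Artin factorisation $L(s,E_9,\chi)=L(s,E_p)L(s,E_{3p^2})$ (respectively $L(s,E_{p^2})L(s,E_{3p})$), one separates the rational central value $L(1,E_p)/\Omega_p$ from the derivative $L'(1,E_{3p^2})/\Omega_{3p^2}$ (and likewise in the other case) to arrive at the stated shape of the formula.

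The main obstacle is the local toric integral at $v=3$. The representation $\pi_3$ is of conductor $3^5$ and the character $\chi_3$ is wildly ramified on $\BQ_3^{\times}$ through $\CO_{K,3}^{\times}/\BZ_3^{\times}(1+9\CO_{K,3})\cong(\BZ/3\BZ)^2$. One must fix compatible test vectors invariant under $U_{0,3}$, write the toric integral as a finite sum over this quotient, and evaluate the translates by using Theorem \ref{modular-action} inside $X_\Gamma^0 \simeq X_\Gamma\times_\BQ K$. The asymmetric factor $2^{-1}$ in the $z_2$-cases, as opposed to the $z_1$-cases, arises precisely from the different contributions of these translates under $\rho_1$ versus $\rho_2$; the auxiliary point $z_3$ (introduced via $\rho_3$ and the additional trace $\Tr_{H_{36p}/H_{9p}}$, cf.\ \eqref{z3} and Remark \ref{remark1}) provides an independent check, because it carries the same Galois action as $z_2$ but is directly a real point on $E_9$, which lets one relate $\wh{h}_\BQ(z_2)$ to a more tractable toric integral. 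Once this local computation is in place, substituting into the master formula and collecting constants yields Theorem \ref{thm:GZ}.
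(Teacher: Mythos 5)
Your proposal follows essentially the same route as the paper: invoke the explicit Gross--Zagier framework already established for the curve $E_9$ in \cite[Theorem 4.3]{HSY} (using Proposition~\ref{Tun-Saito} to place the quaternion algebra and Proposition~\ref{modular-curve} to identify the Shimura curve), identify the $\chi$-isotypic Heegner point with $z_1$ or $z_2$ according to the embedding conventions of Section~\ref{E-G-Z}, factor $L(s,E_9,\chi)$ by Artin formalism, and isolate the genuinely new local input at $v=3$, which is exactly what Corollary~\ref{ration} supplies. The paper itself presents the proof as ``very little modification'' of the argument in \cite{HSY} once Corollary~\ref{ration} is in hand, so your outline is a faithful reconstruction of its argument (the only minor inaccuracy is in the last paragraph: $z_3$ is not used in the proof of Theorem~\ref{thm:GZ} but is treated separately in Theorem~\ref{thm:GZ2}).
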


\begin{thm} \label{thm:GZ2}
One also has the following explicit formulae of Heegner points
\[\frac{L(1,E_{p})L'(1,E_{3p^2})}{\Omega_{p}\Omega_{3p^2}}= 2^{-3} \cdot \wh{h}_\BQ(z_3),\text{ if }p\equiv 2\mod 9\]
And
\[\frac{L(1,E_{p^2})L'(1,E_{3p})}{\Omega_{p^2}\Omega_{3p}}=\\2^{-3}\cdot\wh{h}_\BQ(z_3),\text{ if }p\equiv 5\mod 9.\]
\end{thm}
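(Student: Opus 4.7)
The plan is to reuse the Yuan--Zhang--Zhang explicit Gross--Zagier framework of Theorem \ref{thm:GZ} (following \cite[Theorem 4.3]{HSY}), substituting the CM point $P_3$ of conductor $36p$ (under the embedding $\rho_3$), subsequently traced down from $H_{36p}$ to $H_{9p}$ as in the definition (\ref{z3}) of $z_3$, for the conductor-$9p$ points $P_1, P_2$ used previously. The representation $\pi$, the character $\chi$, and the incoherent quaternion algebra $\BB$ are unchanged, so Proposition \ref{Tun-Saito} applies verbatim and the global toric-period factorization has the same shape. Only the choice of test vector and the resulting local toric integrals change, and only at those places where the new conductor $36p$ differs from $9p$---in practice, at $v=2$.

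First I would check that the local toric integrals at all finite $v\nmid 6p$ remain trivial (the test vector there is unchanged) and that at $v=3$ and $v=p$ they coincide with those already computed in Theorem \ref{thm:GZ}. The $3$-part of $36p$ equals $9$, so $\rho_3(\CO_K)_3\cap R_0(3^5)_3$ agrees with the order coming from $\rho_2$; moreover, Theorem \ref{modular-action}(2) records that the actions of $\omega_3$ and $1+3\omega_3$ under $\rho_3$ are identical to those under $\rho_2$, so the $3$-adic computation transfers verbatim. At $v=p$, $\rho_2$ and $\rho_3$ differ only by conjugation by a $\BZ_p^\times$-valued matrix, so the local integral is unaffected.

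The genuinely new ingredient is the $2$-adic toric integral. A direct inspection of $\rho_3(a+b\omega)=\bigl(\begin{smallmatrix}a-b/2 & -bp/36 \\ 27b/p & a-b/2\end{smallmatrix}\bigr)$ shows that the $2$-part of $\rho_3(\CO_K)\cap R_0(3^5)$ is $\BZ_2+4\CO_{K,2}$, of conductor $4$, whereas for $\rho_2$ it is maximal. The corresponding test vector at $2$ must therefore be produced from the original $2$-adic new vector by averaging over the coset quotient $\CO_{K,2}^\times/\BZ_2^\times(1+4\CO_{K,2})\cong \BZ/2\BZ\times \BZ/3\BZ$ of Proposition \ref{LCF}(3). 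Since $\chi$ is unramified at $2$, the resulting local toric integral reduces to a finite sum of matrix-coefficient values on these coset representatives, which I would evaluate explicitly. Combined with the Haar-measure normalization and with the degree $[H_{36p}:H_{9p}]=6$ built into the trace, the outcome should produce the new constant $2^{-3}$ in place of the $2^{-1}\cdot 9$ of Theorem \ref{thm:GZ}. Geometrically, this averaging is exactly $\Tr_{H_{36p}/H_{9p}}$ in (\ref{z3}), and then applying $\phi\circ\Tr_{H_{3p}/L_{(p)}}$ produces $\widehat{h}_\BQ(z_3)$ on the height side.

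The main obstacle is pinning down the $2$-adic normalizations precisely enough to land on the exact constant $2^{-3}$ rather than an off-by-power-of-$2$ value, since a single misplaced factor in the measure normalization or in the matrix-coefficient sum would be easy to introduce. As a sanity check, both Theorem \ref{thm:GZ} and Theorem \ref{thm:GZ2} compute the same L-ratio, so they predict $\widehat{h}_\BQ(z_3)=36\cdot \widehat{h}_\BQ(z_i)$ (with $i=2$ when $p\equiv 2\mod 9$ and $i=1$ when $p\equiv 5\mod 9$); I would verify this ratio directly from the trace relation defining $z_3$ as an independent cross-check on the local computation.
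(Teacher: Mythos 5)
Your overall strategy is aligned with the paper's (very terse) argument: re-run the explicit Gross--Zagier computation of \cite[Theorem 4.3]{HSY} with the conductor-$36p$ CM point $P_3$ and embedding $\rho_3$, identify the change as localised at $v=2$, and connect it to the degree-$6$ extension $H_{36p}/H_{9p}$. The paper itself handles this in one sentence plus the Remark following Theorem~\ref{thm:GZ2}, stating that the only modification is to replace $\#\Pic(\CO_p)/\#\Pic(\CO_{9p})$ by $\#\Pic(\CO_p)/\#\Pic(\CO_{36p})$ in the bookkeeping. Your plan to re-derive the constant by a fresh $2$-adic toric integral is a parallel route, but two points need repair.

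First, the description of the $2$-adic test vector as an average of the spherical newvector over $\CO_{K,2}^\times/\BZ_2^\times(1+4\CO_{K,2})$ is not quite right. Both $\pi_2$ and $\chi_2$ are unramified, so the admissible local test vector at $2$ remains the spherical newvector. What changes is that under $\rho_3$ only $\rho_3(\BZ_2+4\CO_{K,2})\subset M_2(\BZ_2)$, so the toric period $\beta_2^0(f_2,f_2)$ against the spherical vector is no longer trivial and carries the index $[\CO_{K,2}^\times:\BZ_2^\times(1+4\CO_{K,2})]=6$. Casting this as an averaging of the vector risks counting the factor of $6$ twice — once in the local integral and again when invoking the ``degree built into the trace.'' The paper's Picard-ratio bookkeeping packages the same factor once and cleanly; whichever route you take, you must be careful not to double-count. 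This is exactly the spot you flag as the ``main obstacle,'' and indeed you have not actually pinned down the constant $2^{-3}$; the proposal leaves the crucial numerical step unperformed.

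Second, the proposed sanity check is both mis-stated and circular. Matching the two theorems always compares $z_3$ against $z_2$, in both congruence classes: Theorem~\ref{thm:GZ2} covers the first L-ratio when $p\equiv2\bmod9$ and the second when $p\equiv5\bmod9$, and in each case Theorem~\ref{thm:GZ} expresses the same ratio via $z_2$ (the $2^{-1}\cdot9$ entries). Your pairing ``$i=1$ when $p\equiv5\bmod9$'' is therefore wrong; moreover $z_3$ and $z_2$ share the Galois eigenvalue $[\omega^2]$ under $\sigma_{\omega_3}$ (Theorem~\ref{thm:Galois}), whereas $z_1$ has eigenvalue $[\omega]$, so a comparison with $z_1$ would not even be well-posed. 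More fundamentally, $\Tr_{H_{36p}/H_{9p}}P_3$ is a sum of six distinct Galois conjugates, not six copies of $P_2$; the relation $\wh{h}_\BQ(z_3)=36\,\wh{h}_\BQ(z_2)$ is a \emph{consequence} of the two Gross--Zagier formulae (this is precisely how Corollary~\ref{maincoro} obtains it), not something that can be read off the trace relation, so it cannot serve as an independent cross-check on your local computation.
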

\begin{remark}
The defference between the formulae of $z_2$ and $z_3$ is due to the fact that we take a trace from $H_{36p}$ to $H_{9p}$ for $z_3$ which is of degree $6$. More explictely, we use $\frac{\#\Pic(\CO_p)}{\#\Pic(\CO_{36p})}$ instead of $\frac{\#\Pic(\CO_p)}{\#\Pic(\CO_{9p})}$ for $z_3$ when we modify the proof of \cite[Theorem 4.3]{HSY}.
\end{remark}
\begin{coro}\label{maincoro}
$z_3\in E_3(L(p))$ is nontorsion and satisfies $z_3^{\sigma_{\omega_3}}=[\omega^2]z_3$. If $p\equiv 2\mod 9$, then $\phi_p(z_3)$ is a nontorsion point in $E_{3p}(\BQ)$. If $p\equiv 5\mod 9$, then $\phi_p^2(z_3)$ is a nontorsion point in $E_{3p^2}(\BQ)$. In both cases,  $\phi_p(z_3)$ and $\phi_{p^2}(z_2)$ are not divisible by $3$ over $\BQ$.
\end{coro}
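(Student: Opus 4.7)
The plan is to verify the four claims in turn, with the main obstacle being the non-divisibility by $3$. First I would extract the Galois action: from Theorem \ref{thm:Galois} one has $P_3^{\sigma_{\omega_3}}=[\omega^2]P_3+(0,-4\sqrt{-3})$; the translate is a $3$-torsion point on $E_9$, and by Proposition \ref{LCF}(3) the extension $H_{36p}/H_{9p}$ has degree $6$, so $\Tr_{H_{36p}/H_{9p}}$ kills it:
\[
(\Tr_{H_{36p}/H_{9p}}P_3)^{\sigma_{\omega_3}}=[\omega^2]\Tr_{H_{36p}/H_{9p}}P_3.
\]
Composing with $\phi$ (which commutes with $\sigma_{\omega_3}$ since $\sigma_{\omega_3}$ fixes $\sqrt[3]{9}$ by Proposition \ref{LCF}(2)) and the outer trace to $L_{(p)}$ yields $z_3^{\sigma_{\omega_3}}=[\omega^2]z_3$. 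For nontriviality, Theorems \ref{thm:GZ} and \ref{thm:GZ2} express the same $L$-ratio with different height coefficients, so $\wh h_\BQ(z_3)=36\,\wh h_\BQ(z_i)$ for the relevant $i\in\{1,2\}$; the right-hand side is positive by Theorem \ref{nontrivial}.

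For the descent to $\BQ$, when $p\equiv 2\mod 9$, $\sigma_{\omega_3}$-invariance of $\phi_p(z_3)$ follows by combining $(\sqrt[3]{p^2})^{\sigma_{\omega_3}-1}=\omega^2$ (Proposition \ref{LCF}(2)) with the action of $[\omega^2]$ on the $x$-coordinate (multiplication by $\omega$): the two twists compose to $\omega^3=1$. The remark after Theorem \ref{thm:Galois} shows $\tau_3=p\sqrt{-3}/54$ is purely imaginary, hence $P_3$ and $z_3$ have real coordinates; since $\sqrt[3]{p^2}$ is real, so is $\phi_p(z_3)$. Combining the two invariances gives $\phi_p(z_3)\in E_{3p}(\BR)\cap E_{3p}(L_{(p)})=E_{3p}(\BQ)$, and nontriviality carries over since $\phi_p$ is an isomorphism. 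The $p\equiv 5\mod 9$ case with $\phi_{p^2}$ is the mirror computation, with the real twist constant $p\sqrt[3]{p}$ in place of $\sqrt[3]{p^2}$.

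The non-divisibility by $3$ is the hard part. Suppose for contradiction that $\phi_p(z_3)=3R$ in $E_{3p}(\BQ)$. Through the identification $\phi_p^{-1}: E_{3p}(\BQ)\simeq E_3(L_{(p)})^{\sigma_{\omega_3}=[\omega^2],\,c=+\id}$ implicit in the previous step, this pulls back to $z_3=3Q$ in that subgroup, and then $y_3:=[\sqrt{-3}]z_3$ is $3$-divisible in the sibling subgroup $E_3(L_{(p)})^{\sigma_{\omega_3}=[\omega^2],\,c=-\id}$. I would then use the sharp height identity $\wh h_\BQ(y_3)=36\,\wh h_\BQ(y_2)$ (a consequence of Theorems \ref{thm:GZ}, \ref{thm:GZ2}, and $\wh h_\BQ(y_i)=3\,\wh h_\BQ(z_i)$) to pin down $y_3$ in terms of $y_2$ inside the rank-one $\CO_K$-module $E_3(L_{(p)})^{\sigma_{\omega_3}=[\omega^2]}\otimes\BQ$, and then invoke Theorem \ref{non-divisibility1} (the non-$\sqrt{-3}$-divisibility of $y_2$) through a careful $c$-eigenspace decomposition of $y_2$ to reach a contradiction with the hypothesized $3$-divisibility of $y_3$. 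The essential difficulty, relative to Theorem \ref{non-divisibility1}, is that $z_3^{\sigma_{\omega_3}}=[\omega^2]z_3$ carries no translate term, so the clean Galois/torsion rigidity used there does not apply directly to $z_3$; the rigidity must be imported via $y_3$ and the $c$-eigenspace bookkeeping above. The $p\equiv 5\mod 9$ case is symmetric, replacing $(E_{3p},\phi_p,y_2)$ by $(E_{3p^2},\phi_{p^2},y_1)$.
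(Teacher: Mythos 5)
Your overall skeleton matches the paper's proof: read off $z_3^{\sigma_{\omega_3}}=[\omega^2]z_3$ from Theorem~\ref{thm:Galois} using that the translate $(0,-4\sqrt{-3})$ is $3$-torsion while $[H_{36p}:H_{9p}]=6$; use the reality of $P_3$ together with the cubic twist of $\sqrt[3]{p}$ to descend to $\BQ$; and get nontriviality and non-divisibility by comparing Theorems~\ref{thm:GZ} and \ref{thm:GZ2}. The paper's proof is the same argument, stated more tersely.

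Two small inaccuracies in the first half. With the normalisation $[-\omega](x,y)=(\omega x,-y)$ one computes $[\omega^2](x,y)=(\omega^2 x,y)$, so $[\omega^2]$ multiplies the $x$-coordinate by $\omega^2$, not $\omega$; this happens to cancel the other slip in the same sentence (the twist factor you attach to $\phi_p$), so you land on the right conclusion, but only because the two errors compensate. Also, the height identity coming from Theorems~\ref{thm:GZ} and \ref{thm:GZ2} is always $\wh{h}_\BQ(z_3)=36\,\wh{h}_\BQ(z_2)$; it is never $z_1$ that enters, since $z_1$ lives in the $\sigma_{\omega_3}=[\omega]$-eigenspace while $z_2$ and $z_3$ both live in the $[\omega^2]$-eigenspace, and the two GZ theorems only intersect along the $z_2$/$z_3$ lines.

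The genuine divergence is in the non-divisibility step, where your write-up stops at a plan. The detour through $y_3=[\sqrt{-3}]z_3$ and a ``$c$-eigenspace decomposition of $y_2$'' is not needed, and would be delicate to carry out: $z_2$ is a priori not real (only $z_3$ is, which is why $\tau_3$ was introduced), so $y_2$ has components in both $c$-eigenspaces, and $c$ is $\CO_K$-conjugate-linear so those eigenspaces are not $\CO_K$-submodules. The paper sidesteps all of this. Since $E_3(L_{(p)})^{\sigma_{\omega_3}=[\omega^2]}$ has $\CO_K$-rank one (Kolyvagin) with torsion $E_3[\sqrt{-3}]$, pick a generator $P_0$ modulo torsion and write $y_2\equiv[\beta]P_0$ and any hypothetical $z$ with $z_3=3z$ as $z\equiv[\alpha]P_0$, $\alpha,\beta\in\CO_K$. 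Theorem~\ref{non-divisibility1} forces $\sqrt{-3}\nmid\beta$, hence $3\nmid N(\beta)$. On the other hand $9N(\alpha)\wh{h}_\BQ(P_0)=\wh{h}_\BQ(z_3)=12\wh{h}_\BQ(y_2)=12N(\beta)\wh{h}_\BQ(P_0)$, i.e.\ $3N(\alpha)=4N(\beta)$, forcing $3\mid N(\beta)$, a contradiction. This norm-mod-$3$ computation is the whole content; no $c$-bookkeeping is required. Your remark that the translate-based rigidity of Theorem~\ref{non-divisibility1} ``does not apply directly to $z_3$'' is exactly right, and is precisely why the comparison with $y_2$ through the two explicit GZ formulae is the mechanism: it transports the $\sqrt{-3}$-indivisibility of $y_2$ to the needed congruence obstruction for $z_3$ without ever proving $\sqrt{-3}$-indivisibility of $z_3$ itself.
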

\begin{proof}
Since $[H_{36p}:H_{9p}]=6$, by Theorem \ref{thm:Galois} and (\ref{z3}), we know that $z_3^{\sigma_{\omega_3}}=[\omega^2]z_3$. Since $z_3$ is a real point by Remark \ref{remark1}, by Proposition \ref{LCF}, if $p\equiv 2\mod 9$, $\phi_p(z_3)\in E_{3p}(\BQ)$ and if $p\equiv 5\mod 9$, $\phi_{p^2}(z_3)\in E_{3p^2}(\BQ)$.
By Theorem \ref{thm:GZ} and Theorem \ref{thm:GZ2}, $\wh{h}_\BQ(z_3)=36\wh{h}_\BQ(z_2)=12\wh{h}_\BQ(y_2)$. This implies $z_3$ is nontorsion and $\phi_p(z_3)$, $\phi_{p^2}(z_3)$ can not be divisible by $3$ over $\BQ$. Otherwise there exists point $z$ in $E_3(L(p))^{\sigma_{\omega_3}=[\omega^2]}$ such that $9\wh{h}_\BQ(z)=\wh{h}_\BQ(z_3)=12\wh{h}_\BQ(y_2)$. But this is impossible since $y_2$ is not divisible by $\sqrt{-3}$ in $E_3(L_{(p)})^{\sigma_{\omega_3}=[\omega^2]}$ and $E_3(L(p))^{\sigma_{\omega_3}=[\omega^2]}$ is of rank $1$ over $K$ by Kolyvagin.
\end{proof}

\subsection{Waldspurger's local period integral}
This subsection is purely local and we shall compute the $3$-adic period integral  for the $3$-adic local newform following \cite{HSY2}. Recall $\pi$ is the automorphic representation of $\GL_2(\BQ)$ corresponding to $E_9$ and $\pi_3$ the $3$-adic part of $\pi$. Then the conductor $c(\pi_3)=3^5$. Let $p\equiv 2,5\mod 9$ be an odd prime and let  $\chi:\Gal(\bar{K}/K)\ra \CO_K^\times$ be the character given by $\chi(\sigma)=\chi_{3p}(\sigma)=(\sqrt[3]{3p})^{\sigma-1}$  resp. $\chi(\sigma)=\chi_{3p^2}(\sigma)=(\sqrt[3]{3p^2})^{\sigma-1}$. We also view $\chi$ as a Hecke character on $\BA_K^\times$ by the Artin map and the conductor the 3 part is $c(\chi_3)=(\sqrt{-3})^4$. Assume that $f_3$ is the standard newform of $\pi_3$.  
We shall compute the following normalized period integral
\begin{equation}\label{wpi}
\beta^0_3(f_3, f_3)
=\int\limits_{t\in \BQ_3^\times\backslash K_3^\times}\frac{ (\pi(t)f_3,f_3)}{(f_3,f_3)}\chi_3(t)dt
\end{equation}
which appears in the proof of the explicit Gross-Zagier formulae.
Let $\Theta:K^\times\backslash \BA_K^\times\ra \BC^\times$ be the unitary Hecke character associated to the base-changed CM elliptic curve ${E_9}_{/K}$. 
Then $\Theta$ has conductor $9\CO_K$.  We denote $\Theta_3$ the 3-adic component of $\Theta$. Then $\pi_3$ is the local representation of $\GL_2(\BQ_3)$ corresponding to $\Theta_3$. Note
\[\CO_{K,3}^\times/(1+9\CO_{K,3})\simeq \langle \pm 1\rangle ^{\BZ/2\BZ} \times\langle 1+\sqrt{-3}\rangle^{\BZ/3\BZ}\times\langle 1-\sqrt{-3}\rangle^{\BZ/3\BZ}\times \langle 1+3\sqrt{-3}\rangle^{\BZ/3\BZ}.\]
\begin{lem}\label{thetavalue}
We have $c(\Theta_3)=4$, and $\Theta_3$ is
given explicitly by
\[\Theta_3(-1)=-1,\quad \Theta_3(1+\sqrt{-3})=\frac{-1-\sqrt{-3}}{2},\ \ \Theta_3(\sqrt{-3})=i,\]
\[\Theta_3(1-\sqrt{-3})=\frac{-1+\sqrt{-3}}{2},\quad \Theta_3(1+3\sqrt{-3})=\frac{-1+\sqrt{-3}}{2}.\]
\[\]
\end{lem}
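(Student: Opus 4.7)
The plan is to exploit that $\Theta$ is an idele-class character on $\BA_K^\times/K^\times$, so $\prod_v\Theta_v(\alpha)=1$ for every $\alpha\in K^\times$. Combined with the fact that $E_9/K$ has good reduction outside the unique prime $\mathfrak{p}=(\sqrt{-3})$ above $3$ (so $\Theta$ is unramified at every finite $v\neq\mathfrak{p}$), this reduces the computation to
\[
\Theta_3(\alpha)=\Theta_\infty(\alpha)^{-1}\prod_{\substack{v\text{ finite}\\ v\neq 3,\,v(\alpha)>0}}\Theta_v(\varpi_v)^{-v(\alpha)},
\]
where $\Theta_v(\varpi_v)$ is the Frobenius eigenvalue at $v$ read off from the Euler factor of $L(s,E_9/K)$. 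The conductor $c(\Theta_3)=4$ follows from running Tate's algorithm on $y^2=x^3-48$ base-changed to $K_{\mathfrak{p}}$; hence $\Theta_3$ descends to $\CO_{K,3}^\times/(1+9\CO_{K,3})$.

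Next I would pin down the archimedean component. Since $\Theta$ is the unitary Hecke character of type $(1,0)$ attached to a CM elliptic curve, and the CM embedding is fixed in Proposition \ref{modular-curve} by $[\omega](x,y)=(\omega x,-y)$, the archimedean component must be $\Theta_\infty(z)=\bar z/|z|_\infty$; the complex conjugation on $z$ reflects the fact that $[-\omega]^*\omega_{E_9}=-\omega\cdot\omega_{E_9}$ on the invariant differential.

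With $\Theta_\infty$ and unramifiedness in hand, the computation proceeds generator by generator. For $\alpha=-1$ and $\alpha=\sqrt{-3}$, no finite $v\neq\mathfrak{p}$ contributes, so $\Theta_3(-1)=\Theta_\infty(-1)^{-1}=-1$ and $\Theta_3(\sqrt{-3})=\Theta_\infty(\sqrt{-3})^{-1}=i$ directly. For $\alpha=1\pm\sqrt{-3}$, the identities $1+\sqrt{-3}=-2\omega^2$ and $1-\sqrt{-3}=-2\omega$ show $(\alpha)=\mathfrak{p}_2$, the unique inert prime above $2$; computing the Frobenius eigenvalue $\Theta_{\mathfrak{p}_2}(2)$ from the point count $\#E_9(\BF_4)$, and combining with the archimedean factor $\Theta_\infty(1\pm\sqrt{-3})=(1\mp\sqrt{-3})/2$, yields the stated values. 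For $\alpha=1+3\sqrt{-3}$, the norm is $28$, so $(\alpha)=\mathfrak{p}_2\cdot\mathfrak{p}_7$ for a specific prime $\mathfrak{p}_7$ above the split prime $7$; both Frobenius eigenvalues at primes above $7$ are determined by $a_7(E_9)$, and the factorization of $(\alpha)$ pins down which eigenvalue is $\Theta_{\mathfrak{p}_7}(\pi_7)$, giving $\Theta_3(1+3\sqrt{-3})=(-1+\sqrt{-3})/2$ after combining with $\Theta_{\mathfrak{p}_2}$ and $\Theta_\infty$.

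The main obstacle is keeping all normalizations consistent: fixing the correct unit-ambiguity in $\Theta_{\mathfrak{p}_2}(2)$ and in $\Theta_{\mathfrak{p}_7}(\pi_7)$ compatibly with the chosen CM isomorphism $\CO_K\simeq\End_K(E_9)$, selecting whether $\Theta$ or its complex conjugate is the object of interest, and identifying which of the two primes above $7$ occurs in the factorization of $1+3\sqrt{-3}$. The five stated values form an overdetermined system whose mutual consistency serves as a built-in sanity check on these sign conventions.
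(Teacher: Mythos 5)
Your approach — evaluating $\Theta_3$ via the global product formula $\prod_v\Theta_v(\alpha)=1$ for $\alpha\in K^\times$, using good reduction away from $\mathfrak p=(\sqrt{-3})$ — is the standard textbook way to extract a local component of a Hecke character, and it differs from the paper, which simply cites \cite[Lemma 4.1]{HSY2} for this lemma and carries out the relevant computation there. Your identification of which primes intervene for each generator is correct ($\alpha=-1,\sqrt{-3}$ involve only $\infty$ and $\mathfrak p$; $1\pm\sqrt{-3}=-2\omega^{\mp 1}$ picks up the inert prime over $2$; $1+3\sqrt{-3}$ has norm $28$ and picks up the inert prime over $2$ and one of the two split primes over $7$), and the conductor computation via Tate's algorithm on the base change is a reasonable route.

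However, there is a genuine gap that you identify but do not close, and it is not merely a bookkeeping annoyance. The step "computing the Frobenius eigenvalue $\Theta_{\mathfrak p_2}(2)$ from the point count $\#E_9(\BF_4)$" does not work as stated: at the inert prime $\mathfrak p_2$, the point count only gives the trace $\pi_{\mathfrak p_2}+\bar\pi_{\mathfrak p_2}$, whereas $\pi_{\mathfrak p_2}\in\CO_K$ is a priori any of the six unit multiples of $2$, and for $\#\tilde E_9(\BF_4)=7$ (trace $-2$) one still has the genuine ambiguity $\pi_{\mathfrak p_2}\in\{2\omega,2\omega^2\}$. To resolve this one needs more than the point count, e.g.\ the Deuring/Kronecker congruence $\pi_{\mathfrak p}\equiv a^{q}\pmod{\mathfrak p}$ relating the Frobenius to the CM action on the reduced tangent space, together with the chosen normalization $[\alpha]^*\omega_{E_9}=\alpha\,\omega_{E_9}$. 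The same remark applies at $7$ — one must decide which of $-2\pm\sqrt{-3}$ is $\Theta_{\mathfrak p_7}(\pi_7)$, and this again requires the CM action, not just $a_7$. Similarly, the assertion that the infinity type is $\bar z/|z|$ rather than $z/|z|$ follows from a chain of conventions (Artin map direction, Weil versus Deligne normalization of $L$-functions) and is not a consequence of the single identity $[-\omega]^*\omega_{E_9}=-\omega\,\omega_{E_9}$ alone. Your appeal to mutual consistency of the five stated values as a sanity check is a reasonable guard against computational slips within a fixed set of conventions, but it cannot distinguish $\Theta$ from $\bar\Theta$, whose local values also form a mutually consistent system. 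These normalization choices must be fixed once and for all by the CM congruence and the reciprocity-law convention fixed in the paper; until that is done, the sketch does not single out the stated table.
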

\begin{proof}
This is \cite[Lemma 4.1]{HSY2}.
\end{proof}
The local  character  $\chi_3$ has conductor $\BZ_3^\times(1+9\CO_{K,3})$, and hence it is in fact a character of the quotient group $K_3^\times/\BQ_3^\times(1+9\CO_{K,3})$. Note that
\[K_3^\times/\BQ_3^\times(1+9\CO_{K,3})\simeq \langle\sqrt{-3}\rangle^{\BZ}\times\langle 1+\sqrt{-3}\rangle^{\BZ/3\BZ}\times\langle 1+3\sqrt{-3}\rangle^{\BZ/3\BZ}.\]
\begin{lem} \label{chi}
We have $c(\chi_3)=4$ and $\chi_3$ is given explicitly by the following tables:
\begin{itemize}
\item[1.] if $\chi=\chi_{3p}$, then
\begin{center}
\begin{tabular}{|c|c|c|c|c|c|}
\hline
$p\mod 9$&$\chi_3(1+\sqrt{-3})$&$\chi_3(1+3\sqrt{-3})$&$\chi_3(\sqrt{-3})$\\
\hline
$2$&$\omega^2$&$\omega$&$1$\\
\hline
$5$&$\omega$&$\omega$&$1$\\
\hline 
\end{tabular}.
\end{center}
\item[2.] if $\chi=\chi_{3p^2}$, then
\begin{center}
\begin{tabular}{|c|c|c|c|c|c|}
\hline
$p\mod 9$&$\chi_3(1+\sqrt{-3})$&$\chi_3(1+3\sqrt{-3})$&$\chi_3(\sqrt{-3})$\\
\hline
$2$&$\omega$&$\omega$&$1$\\
\hline
$5$&$\omega^2$&$\omega$&$1$\\
\hline 
\end{tabular}.
\end{center}
\end{itemize}
\end{lem}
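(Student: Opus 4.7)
The plan is to translate $\chi_n$ into explicit local cubic Hilbert symbols and apply the product formula. Via the Artin reciprocity of the paper, $\chi_n$ is a Hecke character on $K^\times\backslash\BA_K^\times$ whose local component at a place $v$ is given by the cubic Hilbert symbol $\chi_{n,v}(t)=(n,t)_{K_v,3}$ (the sign is dictated by the normalization sending a uniformizer to arithmetic Frobenius).

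First I would verify the conductor claim $c(\chi_3)=4$ by analyzing the Kummer extension $K_3(\sqrt[3]n)/K_3$. Writing $n=-(\sqrt{-3})^2\cdot u$ with unit part $u=-p$ or $-p^2$, a direct check shows $u\equiv 1\pmod{(\sqrt{-3})^2}$ but $u\not\equiv 1\pmod{(\sqrt{-3})^3}$. Combined with the filtration calculus $(U^{(m)})^3\subset U^{(m+2)}$ for $m\ge 2$ in the group of principal units $U^{(m)}=1+(\sqrt{-3})^m\CO_{K,3}$, a direct Artin conductor computation for the resulting cyclic cubic extension yields conductor exponent $4$, showing $\chi_3$ is trivial on $1+9\CO_{K,3}$ but not on $1+3\CO_{K,3}$.

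To obtain the tabulated values I apply the product formula $\prod_v \chi_{n,v}(t)=1$ for each test element $t\in\{1+\sqrt{-3},\,1+3\sqrt{-3},\,\sqrt{-3}\}\subset K^\times$. The archimedean factor is trivial ($K$ imaginary, $\chi_n$ of finite order), and at any finite $v\nmid 3n$ where $t$ is a unit, $\chi_{n,v}$ is unramified and trivial on units. Using the factorizations $(\sqrt{-3})$, $(1+\sqrt{-3})=(2)$, and $(1+3\sqrt{-3})=(2)\cdot(2+3\omega)$ with $\fp_7=(2+3\omega)$ a prime above the split $7$, the only remaining auxiliary places are $(2)$ and $\fp_7$. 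Since $\bar n\equiv 1$ in $\BF_4^\times$ and cubing is trivial on $\BF_4^\times\cong\BZ/3$, the character $\chi_{n,(2)}$ is trivial. For $\fp_7$, I would apply the product formula separately to the factor $2+3\omega$ and combine it with a tame symbol calculation at $p$, leaving a contribution independent of $p\bmod 7$ after the split primes above $7$ are paired off via Hilbert reciprocity.

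The decisive piece is the tame cubic Hilbert symbol at $v=p$. Since $p\ne 3$ is inert in $K$, we have $K_p=\BQ_{p^2}$ and $K_p(\sqrt[3]n)/K_p$ is tamely ramified of degree $3$, so for any unit $t\in\CO_{K_p}^\times$,
\[
\chi_{n,p}(t)\;=\;\bar t^{\,-v_p(n)(p^2-1)/3}\ \in\ \mu_3\subset\BF_{p^2}^\times.
\]
Substituting $1+\sqrt{-3}=-2\omega^2$ and $\sqrt{-3}=2\omega+1$, together with $(-2)^{(p^2-1)/3}=1$ in $\BF_p^\times$ and the congruences
\[
(p^2-1)/3 \equiv \begin{cases} 1 \pmod 3, & p\equiv 2\pmod 9,\\ 2 \pmod 3, & p\equiv 5\pmod 9,\end{cases}
\]
one evaluates $\chi_{n,p}(t)$ as an explicit power of $\omega$, identified with $\mu_3(K)$ via $\CO_K\hookrightarrow\BF_{p^2}$. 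Inverting through the product formula delivers the tabulated values of $\chi_3(t)$.

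The main obstacle is the first step, the wildly ramified conductor computation at $v=3$, which must be done carefully in the filtration $U^{(i)}$; the remainder is bookkeeping, with the crucial care being the orientation of sign conventions (Artin reciprocity $\leftrightarrow$ cubic Hilbert symbol) so that the inversions in the product formula produce the table rather than its inverse.
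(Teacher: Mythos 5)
Your plan — read off each $\chi_3(t)$ from the global product formula for the cubic Hilbert symbol, reducing to tame symbols at the auxiliary primes dividing $t$ and at $p$ — is a legitimate class-field-theory route, and it is close in spirit to what the authors do elsewhere in the paper (compare the proof of Proposition~\ref{LCF}(3)). For this lemma, however, the paper simply defers to \cite[Lemma 4.2]{HSY2}, so there is no in-text proof to compare against line by line; I will comment on the internal soundness of your sketch.

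The entries $\chi_3(\sqrt{-3})$ and $\chi_3(1+\sqrt{-3})$ go through as you describe. For $t=\sqrt{-3}$ the only non-archimedean place with a contribution is $(\sqrt{-3})$ itself, and one checks $\bar{\sqrt{-3}}^{(p^2-1)/3}=(-1)^{(p+1)/3}=1$ at $p$. For $t=1+\sqrt{-3}=-2\omega^2$, the prime $(2)$ contributes trivially because $\bar n\in\BF_2^\times\subset\BF_4^\times$ (your phrase ``cubing is trivial on $\BF_4^\times$'' is not quite the right reason, but the conclusion is right), and at $p$ you get $\bar\omega^{\,2v_p(n)(p^2-1)/3}$, which with your congruences for $(p^2-1)/3\bmod 3$ reproduces the table. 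Your sign convention for the tame symbol (unit in the first slot, valuation in the second) also agrees with the one used in the proof of Proposition~\ref{LCF}. A small slip: in the decomposition $n=-(\sqrt{-3})^2 u$, the unit should be $u=-p$ resp.\ $u=p^2$ (not $-p^2$) to have $u\equiv 1\pmod 3$; this is harmless since $-1$ is a cube.

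The genuine gap is the entry $\chi_3(1+3\sqrt{-3})$, and you have misdiagnosed where the difficulty lies. The conductor $c(\chi_3)=4$ is already stated and used elsewhere in the paper, so your step one is not the bottleneck. The bottleneck is this: $(1+3\sqrt{-3})=(2)\fp_7$ with $\fp_7=(2+3\omega)$ a single prime above the split $7$, and only that one prime appears — there are no ``split primes above $7$ to pair off via Hilbert reciprocity.'' The tame symbol at $\fp_7$ is $\bar n^{\pm 2}\in\mu_3\subset\BF_7^\times$, which depends on $p\bmod 7$, and the tame symbol at $p$ is $\overline{1+3\sqrt{-3}}^{\pm v_p(n)(p^2-1)/3}\in\mu_3\subset\BF_{p^2}^\times$, which is a genuine cubic power residue symbol of $1+3\sqrt{-3}$ modulo $p$ (unlike $1+\sqrt{-3}=-2\omega^2$, the element $1+3\sqrt{-3}$ is not a power of $\omega$ times a rational, so you cannot read this off). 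The fact that their product is a fixed power of $\omega$ independent of $p\bmod 7$ is exactly the content of cubic reciprocity in $\BZ[\omega]$ between $p$ and the primary prime associated to $2+3\omega$, together with an explicit evaluation of the resulting supplement; none of this is ``bookkeeping,'' and your sketch does not carry it out. A much shorter alternative, entirely internal to the paper, is to observe that $2+3\omega=\frac{1+3\sqrt{-3}}{2}$ and $1+3\omega=\frac{-1+3\sqrt{-3}}{2}$, so $\chi_3(1+3\sqrt{-3})=\chi_3(1+3\omega)^{-1}$; then Proposition~\ref{LCF}(1) (quoted from \cite{SSY}) gives $\chi_{3,3}(1+3\omega)=\omega^2$, while $\chi_{p,3}(1+3\omega)=1$ because $\sqrt[3]{p}\in H_{3p}$ is fixed by $\sigma_{1+3\omega_3}$; multiplying gives $\chi_3(1+3\sqrt{-3})=\omega$ for both $\chi_{3p}$ and $\chi_{3p^2}$, uniformly in $p\bmod 9$. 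Either you should supply the cubic reciprocity argument in full, or replace it with a deduction of this kind.
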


\begin{proof}
The proof is routine in class-field theory. See \cite[Lemma 4.2]{HSY2} for more details.
\end{proof}

\begin{coro}
\label{thetachivalue}
If $p\equiv 2 \textrm{ resp. } 5 \mod 9$, and $\chi=\chi_{3p}\textrm{ resp. }\chi_{3p^2}$,
then the local character $\Theta_3\ov\chi_3$ is given explicitly by
\[\Theta_3\ov\chi_3(-1)=-1,\quad \Theta_3\ov\chi_3(1+\sqrt{-3})=1,\]
\[\Theta_3\ov\chi_3(1-\sqrt{-3})=1,\quad \Theta_3\ov\chi_3(1+3\sqrt{-3})=1,\quad \Theta_3\ov\chi_3(\sqrt{-3})=i.\]
If $p\equiv 2 \textrm{ resp. } 5 \mod 9$, and $\chi=\chi_{3p^2}\textrm{ resp. }\chi_{3p}$,
the local character $\Theta_3\ov\chi_3$ is given explicitly by
\[\Theta_3\ov\chi_3(-1)=-1,\quad \Theta_3\ov\chi_3(1+\sqrt{-3})=\omega,\]
\[\Theta_3\ov\chi_3(1-\sqrt{-3})=\omega^2,\quad \Theta_3\ov\chi_3(1+3\sqrt{-3})=1,\quad \Theta_3\ov\chi_3(\sqrt{-3})=i.\]
\end{coro}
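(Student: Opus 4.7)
The plan is to derive the corollary by a direct, pointwise computation: multiply the values of $\Theta_3$ from Lemma \ref{thetavalue} by the values of $\bar\chi_3$ obtained by conjugating the tables in Lemma \ref{chi}, evaluated on a set of representatives for the quotient $K_3^\times/\BQ_3^\times(1+9\CO_{K,3})$. Since $\chi_3$ factors through this quotient, it suffices to record the values at $-1$, $\sqrt{-3}$, $1+\sqrt{-3}$, $1-\sqrt{-3}$, and $1+3\sqrt{-3}$. Two trivial entries come for free: $\chi_3$ is cubic so $\chi_3(-1)=1$, giving $\Theta_3\bar\chi_3(-1)=-1$; and $\chi_3(\sqrt{-3})=1$ by Lemma \ref{chi}, so $\Theta_3\bar\chi_3(\sqrt{-3})=i$. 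These match the claim in both cases.

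For the remaining values, I will split according to the two sub-cases of the corollary. In the first case ($\chi=\chi_{3p}$ with $p\equiv 2\mod 9$, or $\chi=\chi_{3p^2}$ with $p\equiv 5\mod 9$), Lemma \ref{chi} gives $\chi_3(1+\sqrt{-3})=\omega^2$ and $\chi_3(1+3\sqrt{-3})=\omega$, hence $\bar\chi_3(1+\sqrt{-3})=\omega$ and $\bar\chi_3(1+3\sqrt{-3})=\omega^2$. Combining with $\Theta_3(1+\sqrt{-3})=\omega^2$ and $\Theta_3(1+3\sqrt{-3})=\omega$ from Lemma \ref{thetavalue} yields $\Theta_3\bar\chi_3=1$ at both elements. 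To obtain the value at $1-\sqrt{-3}$, the key observation is that $(1+\sqrt{-3})(1-\sqrt{-3})=4\in\BQ_3^\times$, so the triviality of $\chi_3$ on $\BQ_3^\times$ forces $\chi_3(1-\sqrt{-3})=\chi_3(1+\sqrt{-3})^{-1}=\omega$, and therefore $\bar\chi_3(1-\sqrt{-3})=\omega^2$; combined with $\Theta_3(1-\sqrt{-3})=\omega$ this gives $1$, as claimed.

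The second case is completely parallel: from Lemma \ref{chi} one reads $\chi_3(1+\sqrt{-3})=\omega$ and $\chi_3(1+3\sqrt{-3})=\omega$, so $\bar\chi_3(1+\sqrt{-3})=\omega^2$ and $\bar\chi_3(1+3\sqrt{-3})=\omega^2$. Multiplying by the respective $\Theta_3$ values $\omega^2$ and $\omega$ produces $\Theta_3\bar\chi_3(1+\sqrt{-3})=\omega$ and $\Theta_3\bar\chi_3(1+3\sqrt{-3})=1$, and the same $\BQ_3^\times$-triviality argument yields $\bar\chi_3(1-\sqrt{-3})=\omega$ and hence $\Theta_3\bar\chi_3(1-\sqrt{-3})=\omega^2$. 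Since this amounts to arithmetic in $\mu_3$, no serious obstacle arises; the only bookkeeping subtlety is the reduction of $\chi_3(1-\sqrt{-3})$ via the norm relation, which needs to be stated explicitly so that the proof does not appear to rely on data absent from Lemma \ref{chi}.
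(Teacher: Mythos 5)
Your computation is correct and is exactly what the paper implicitly does (the Corollary is stated without a separate proof, being a direct product of the values in Lemma \ref{thetavalue} and Lemma \ref{chi}). You correctly translate $\Theta_3$'s values into powers of $\omega=\frac{-1+\sqrt{-3}}{2}$, you correctly use $\ov\chi_3=\chi_3^{-1}$, and your recovery of $\chi_3(1-\sqrt{-3})$ from the relation $(1+\sqrt{-3})(1-\sqrt{-3})=4\in\BQ_3^\times$ together with the triviality of $\chi_3$ on $\BQ_3^\times$ is the right way to fill in the one value that Lemma \ref{chi} does not list explicitly.
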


Let $\theta_3$ be the $3$-adic character which parametrizes the supercuspidal representation $\pi_3$ via compact-induction construction as in \cite[Section 2.2]{HSY2}. The test vector issue for Waldspurger's period integral is closely related to $c(\theta_3\ov\chi_3)$ or $c(\theta_3\chi_3)$. We can work out these  by using Lemma \ref{thetavalue}, \ref{chi} and Corollary \ref{thetachivalue}, and the relation between $\theta_3$ and $\Theta_3$ in \cite[Theorem 2.8]{HSY2}.
Now we can prove the following key Lemma.
\begin{lem}\label{Cor:AllnecessaryFormulationForThetaChi}
 If $p\equiv 2 \textrm{ resp. } 5 \mod 9$, and $\chi=\chi_{3p}\textrm{ resp. }\chi_{3p^2}$, we have $\theta_3\ov\chi_3=1$. If $p\equiv 2 \textrm{ resp. } 5 \mod 9$, and $\chi=\chi_{3p^2}\textrm{ resp. }\chi_{3p}$, we have $c(\theta_3\ov\chi_3)=2$ and $\alpha_{\theta_3\ov\chi_3}=\frac{1}{3\sqrt{-3}}$. Moreover, in any cases, $c(\theta_3\ov{\chi}_3)\leq c(\theta_3\chi_3)$.
\end{lem}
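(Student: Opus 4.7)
The plan is to reduce the statement to an explicit comparison of characters on a small finite quotient of $K_3^\times$ and then read off both the conductor and the parameter $\alpha$ from the resulting tables.

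First I would invoke the relation between $\theta_3$ and $\Theta_3$ recorded in \cite[Theorem 2.8]{HSY2}: since $\pi_3$ is the supercuspidal representation associated to the CM Hecke character $\Theta$, the parameter $\theta_3$ differs from $\Theta_3$ by a specific twist whose values on the generators $-1,\,1+\sqrt{-3},\,1-\sqrt{-3},\,1+3\sqrt{-3},\,\sqrt{-3}$ of $\CO_{K,3}^\times/(1+9\CO_{K,3})$ and of $K_3^\times/\BQ_3^\times(1+9\CO_{K,3})$ are tabulated there. Multiplying these values by the ones from Corollary \ref{thetachivalue} (respectively their complex conjugates for the $\theta_3\chi_3$ case) yields a complete table of $\theta_3\ov\chi_3$ and $\theta_3\chi_3$ on these generators.

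Next I would read the conductor off this table using the standard filtration $1+(\sqrt{-3})^n\CO_{K,3}$. For the first case ($p\equiv 2$ with $\chi=\chi_{3p}$, or $p\equiv 5$ with $\chi=\chi_{3p^2}$), the values of $\Theta_3\ov\chi_3$ in Corollary \ref{thetachivalue} are exactly the ones which are cancelled by the Langlands twist of \cite[Theorem 2.8]{HSY2}, so all generators map to $1$ and $\theta_3\ov\chi_3$ is trivial. For the second case, the same computation shows $\theta_3\ov\chi_3$ is trivial on $1+3\sqrt{-3}\CO_{K,3}$ but nontrivial on $1+\sqrt{-3}$, so the conductor is $(\sqrt{-3})^2$, i.e. $c(\theta_3\ov\chi_3)=2$. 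The parameter $\alpha_{\theta_3\ov\chi_3}$ is then computed from its defining property, namely that $\theta_3\ov\chi_3(1+u\sqrt{-3})=\psi(\alpha_{\theta_3\ov\chi_3}u\sqrt{-3})$ for a fixed additive character $\psi$ of $\BQ_3$; plugging in the two nontrivial values $\omega,\omega^2$ from Corollary \ref{thetachivalue} and solving gives $\alpha_{\theta_3\ov\chi_3}=1/(3\sqrt{-3})$.

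Finally, for the inequality $c(\theta_3\ov\chi_3)\le c(\theta_3\chi_3)$, I would repeat the same table for $\theta_3\chi_3$ (which amounts to applying $\omega\leftrightarrow\omega^2$ in the $\ov\chi_3$ table). In the first case the resulting character is nontrivial on $1+\sqrt{-3}$ so has conductor $\ge 2$, while $\theta_3\ov\chi_3=1$; in the second case both have conductor $2$. Either way the inequality holds.

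The only delicate point is bookkeeping: the cube roots of unity appearing in \cite[Theorem 2.8]{HSY2} must be matched against those in Lemmas \ref{thetavalue} and \ref{chi} with the correct convention, and the normalization of $\alpha_{\theta_3\ov\chi_3}$ needs to agree with the one used in the explicit Waldspurger integral. The computation itself is elementary once these conventions are fixed.
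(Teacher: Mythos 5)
Your approach matches the paper's: both express $\theta_3=\Theta_3\Delta_{\theta_3}$ (your ``Langlands twist'' from \cite[Theorem 2.8]{HSY2}, which the paper pins down explicitly via the Gauss sum $\tau(\eta_3,\psi'_3)$ and the computation $\lambda_{K_3/\BQ_3}(\psi_3)=-i$), then read conductors off Lemma \ref{thetavalue}, Lemma \ref{chi}, and Corollary \ref{thetachivalue}, and extract $\alpha_{\theta_3\ov\chi_3}$ from the defining property on $1+3\sqrt{-3}\CO_{K,3}$. One slip in your last paragraph: in the second case $c(\theta_3\chi_3)=4$, not $2$. Since $\chi_3$ is cubic, $\theta_3\chi_3=(\theta_3\ov\chi_3)\cdot\ov\chi_3$, and the factor $\ov\chi_3$ has conductor $4$, which strictly dominates $c(\theta_3\ov\chi_3)\in\{0,2\}$; so $c(\theta_3\chi_3)=4$ in \emph{every} case. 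This one-line observation is exactly how the paper settles the inequality, and it is cleaner than retabulating (your $\omega\leftrightarrow\omega^2$ swap only transforms $\ov\chi_3$ into $\chi_3$, not $\theta_3\ov\chi_3$ into $\theta_3\chi_3$, since $\theta_3$ does not take values in the cube roots of unity). The inequality you want still follows from your weaker statements, but the stated conductor value is incorrect.
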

\begin{proof}
Let $\psi_3$ be the additive character such that $\psi_3(x)=e^{2\pi i \iota(x)}$ where $\iota:\BQ_3\rightarrow \BQ_3/\BZ_3 \subset \BQ/\BZ$ is the map given by $x\mapsto -x\mod \BZ_3$ which is compatible with the choice in \cite{CST14}. Let $\psi_{K_3}(x)=\psi_3\circ \Tr_{K_3/\BQ_3}(x)$, be the additive character of $K_3$. 

Recall that $\alpha_{\Theta_3}$ is the number associated to $\Theta_3$ as in \cite[Lemma 2.1]{HSY2} so that
\[\Theta_3(1+x)=\psi_{K_3}(\alpha_{\Theta_3} x), \]
for any $x$ satisfying $v_{K_3}(x)\geq c(\Theta_3)/2=2$. By the definition of $\psi_{K_3}$ and Lemma \ref{thetavalue}, we know that $\alpha_{\Theta_3}=\frac{1}{9\sqrt{-3}}$.  Now let $\eta_3$ be the quadratic character associated to the quadratic field extension $K_3/\BQ_3$.
Then by \cite[Proposition 34.3]{BushnellHenniart:06a}, $\lambda_{K_3/\BQ_3}(\psi')=\tau(\eta_3,\psi_3')/\sqrt{3}=-i$, here $\tau(\eta_3,\psi_3')$ is the Gauss sum and $\psi_3'(x)=\psi_3(\frac{x}{3})$ is the additive character of level one. By \cite[Lemma 5.1]{Langlands}, $\lambda_{K_3/\BQ_3}(\psi_3)=\eta_3(3)\lambda_{K_3/\BQ_3}(\psi_3')=-i$. Then $\Delta_{\theta_3}$ is the unique level one character of $K_3$ such that $\Delta_{\theta_3}|_{\BZ_3^\times}=\eta_3$ and
\[\Delta_{\theta_3}(\sqrt{-3})=\eta((\sqrt{-3})^3\alpha_{\Theta_3})\lambda_{K_3/\BQ_3}(\psi_3)^{3}=-i.\]
Recall that $\theta_3=\Theta_3\Delta_{\theta_3}$. Then by Corollary \ref{thetachivalue} we can easily check that: 

\begin{enumerate}
\item  If $p\equiv 2 \textrm{ resp. } 5 \mod 9$, and $\chi=\chi_{3p}\textrm{ resp. }\chi_{3p^2}$, $\theta_3\ov\chi_3$ is the trivial character. 
\item If $p\equiv 2 \textrm{ resp. } 5 \mod 9$, and $\chi=\chi_{3p^2}\textrm{ resp. }\chi_{3p}$, $\theta_3\ov\chi_3$ is of level 2 and by definition we can choose $\alpha_{\theta_3\ov\chi_3}=\frac{1}{3\sqrt{-3}}$.
\end{enumerate}
Since $\chi_3$ is a cubic character, $\theta_3\chi_3=\theta_3\ov{\chi}_3^2$. Since $c(\chi_3)=c(\ov{\chi}_3)=4$, $c(\theta_3\chi_3)=4$ and the last assertion follows.
\end{proof}

To apply the results in \cite{HSY2} to calculate the local period integral, we take $\BF=\BQ_3$,  $\varpi=3=q$, $D=-3$, $K_3\simeq\BE\simeq\BL\simeq\BQ(\sqrt{-3})_3$, $c(\theta_3)=c(\chi_3)=4$, $n=2$. By \cite[Lemma 2.9]{HSY2}, we have the minimal vector $\varphi_0=\Char(\varpi^{-2}U_\BF(1))$ in the Kirillov model.  Recall $K$ is embedded into $\rm{M}_2(\BQ)$ as in Section 2.2 which linearly extends the following map:
\begin{equation}\label{emb}
\sqrt{-3}\mapsto \matrixx{a}{3^{-2}b}{3^3c}{-a}:=\begin{cases}
 \matrixx{3}{-2p/9}{54/p}{-3}, &\text{\ if $K$ is embedded under $\rho_1$;}\\
  \matrixx{9}{-2p/9}{374/p}{-9}, &\text{\ if $K$ is embedded under $\rho_2$;}\\
 \matrixx{0}{-p/18}{54/p}{0}, &\text{\ if $K$ is embedded under $\rho_3$.}
 \end{cases}
\end{equation}

\begin{prop}\label{Prop:TestingForNew}
Suppose $\Vol(\BZ_3^\times\backslash\CO_{K,3}^\times)=1$ so that $\Vol(\BQ_3^\times\backslash K_3^\times)=2$.
For $f_3$ being the newform, $K$ being embedded in $\M_2(\BQ)$ as in $(\ref{emb})$, we have
\begin{equation}
 \beta^0_3(f_3,f_3)=\begin{cases}
 1, &\text{\ if $p\equiv 2 \textrm{ resp. } 5 \mod 9$, $\chi=\chi_{3p}\textrm{ resp. }\chi_{3p^2}$ and $K$ is embedded under $\rho_2$ or $\rho_3$;}\\
 1/2, &\text{\ if $p\equiv 2 \textrm{ resp. } 5 \mod 9$, $\chi=\chi_{3p^2}\textrm{ resp. }\chi_{3p}$ and $K$ is embedded under $\rho_1$. } \end{cases}
\end{equation}
\end{prop}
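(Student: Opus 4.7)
The plan is to apply the explicit framework for Waldspurger's local period integral of supercuspidal representations developed in \cite{HSY2}, which expresses $\beta^0_3(f_3, f_3)$ through the minimal vector $\varphi_0 = \Char(\varpi^{-2}U_\BF(1))$ in the Kirillov model identified by \cite[Lemma 2.9]{HSY2}. Since $c(\pi_3) = 3^5 = \varpi^5$ is comparatively large while $\varphi_0$ has simple support, the newform $f_3$ and $\varphi_0$ differ by an explicit $\GL_2(\BQ_3)$-translate, and after a corresponding change of variables the toric integral in (\ref{wpi}) is recast as an integral of $(\pi(t)\varphi_0, \varphi_0)\,\chi_3(t)$ over $\BQ_3^\times\backslash K_3^\times$.

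First I would decompose $\BQ_3^\times\backslash K_3^\times$ using the description $K_3^\times/\BQ_3^\times(1+9\CO_{K,3}) \simeq \langle\sqrt{-3}\rangle^\BZ \times \langle 1+\sqrt{-3}\rangle^{\BZ/3\BZ} \times \langle 1+3\sqrt{-3}\rangle^{\BZ/3\BZ}$ given before Lemma \ref{chi}; since $\Vol(\BQ_3^\times\backslash K_3^\times) = 2$ under the normalization, each of the eighteen classes inside $\CO_{K,3}^\times\cdot\BQ_3^\times/\BQ_3^\times(1+9\CO_{K,3})$ carries measure $1/9$. For each coset representative $t$, I would convert $t$ into a matrix via (\ref{emb}), compute its Iwasawa decomposition in $\GL_2(\BQ_3)$, and evaluate $\pi(t)\varphi_0$ explicitly.

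The essential dichotomy comes from the valuation of the diagonal entry $a$ of $\iota(\sqrt{-3})$: one has $v_3(a) = 1$ for $\rho_1$, $v_3(a) = 2$ for $\rho_2$, and $a = 0$ for $\rho_3$. This valuation controls the support of the matrix coefficient $(\pi(t)\varphi_0, \varphi_0)$ in the torus, and matches precisely the two regimes distinguished in Lemma \ref{Cor:AllnecessaryFormulationForThetaChi}: the cases $\rho_2, \rho_3$ correspond to $\theta_3\ov\chi_3 = 1$, where the character is constant on the supporting cosets and one collects the full contribution $\beta^0_3 = 1$; the case $\rho_1$ corresponds to $c(\theta_3\ov\chi_3) = 2$ with $\alpha_{\theta_3\ov\chi_3} = \tfrac{1}{3\sqrt{-3}}$, where the non-trivial additive-character behavior of $\theta_3\ov\chi_3$ on the level-2 subgroup produces a cancellation that exactly halves the answer to $\beta^0_3 = 1/2$.

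The main obstacle will be aligning the two normalizations: the minimal vector $\varphi_0$ has support $\varpi^{-2}U_\BF(1)$ while $f_3$ is fixed by $K_0(\varpi^5)$, so verifying that the translate relating them is compatible with each embedding $\rho_i$ (so that the toric orbit of the minimal vector meets $\varphi_0$ on exactly the predicted subset of $K_3^\times$) is the delicate point. Once this matching is established, the remaining step is a finite character-sum evaluation on $\CO_{K,3}^\times/\BZ_3^\times(1+9\CO_{K,3})$ using the explicit values of $\Theta_3\ov\chi_3$ recorded in Corollary \ref{thetachivalue}; the fact that $c(\theta_3\ov\chi_3) \leq c(\theta_3\chi_3)$ (rather than the opposite) from Lemma \ref{Cor:AllnecessaryFormulationForThetaChi} ensures we are in the regime where the minimal vector itself is already a valid test vector for the \cite{HSY2} recipe, so no further translate is needed and the computation closes.
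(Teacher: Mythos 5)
Your outline captures the correct dichotomy---the two regimes are indeed governed by $c(\theta_3\ov\chi_3)\in\{0,2\}$, which in turn tracks the $3$-adic valuation of the diagonal entry $a$ in \eqref{emb}---but several of the technical claims do not hold, and a crucial verification is missing.

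First, the assertion that the newform $f_3$ and the minimal vector $\varphi_0$ ``differ by an explicit $\GL_2(\BQ_3)$-translate'' is not correct. By \cite[Corollary 2.10]{HSY2}, after conjugating the embedding in \eqref{emb} into the standard one by $\matrixx{-9c}{a/3}{0}{1}$, the translated newform is a \emph{sum} of $q-1=2$ translates of $\varphi_0$,
\[\pi_3\lrb{\matrixx{-9c}{a/3}{0}{1}}f_3=\frac{1}{\sqrt{2}}\sum_{x\in(\BZ_3/3\BZ_3)^\times}\pi_3\lrb{\matrixx{1}{a/3}{0}{1}\matrixx{x}{0}{0}{1}}\varphi_0,\]
so the toric integral becomes a double sum, not a single pairing $(\pi(t)\varphi_0,\varphi_0)$. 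Related to this, the claim that the minimal vector ``itself is already a valid test vector'' so that ``no further translate is needed'' is wrong: the newform, not $\varphi_0$, is the test vector here, and the inequality $c(\theta_3\ov\chi_3)\leq c(\theta_3\chi_3)$ only decides whether one should pair against $\chi_3$ or $\ov\chi_3$ in the machinery of \cite{HSY2}; it does not collapse the sum of translates.

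Second, your explanation of the factor $1/2$ in the $\rho_1$ case as a ``cancellation'' from the level-$2$ additive-character behavior is a heuristic that does not match the mechanism: in \cite[Proposition 3.3]{HSY2} the $l=1$ case contributes through a prefactor $\frac{1}{(q-1)q^{\lceil c(\theta_3)/(2e_\BL)\rceil-1}}\cdot\frac{1}{q^{\lfloor l/2\rfloor}}$, and the \emph{nonvanishing} of the diagonal term is not automatic---it requires checking that the discriminant $\Delta(1)$ from \cite[(2.17)]{HSY2} is a square modulo $\varpi^2$. The paper computes $\Delta(1)\equiv 0\mod\varpi^2$ explicitly using $\alpha_{\theta_3}=\tfrac{1}{9\sqrt{-3}}$ and $\alpha_{\theta_3\ov\chi_3}=\tfrac{1}{3\sqrt{-3}}$; without this step one cannot distinguish the outcome $1/2$ from $0$. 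Your plan omits this entirely. Finally, the strategy of decomposing $\BQ_3^\times\backslash K_3^\times$ into the $18$ cosets of $\BQ_3^\times(1+9\CO_{K,3})$, Iwasawa-decomposing each representative, and summing character values is not a character-sum on $\CO_{K,3}^\times/\BZ_3^\times(1+9\CO_{K,3})$: the integrand is a matrix coefficient of a depth-$>0$ supercuspidal, whose values on these cosets are not governed by $\Theta_3\ov\chi_3$ alone but require the compact-induction structure. The paper avoids this by directly invoking the packaged evaluation \cite[Proposition 3.3]{HSY2}, together with the observation that the off-diagonal terms vanish.
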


\begin{proof}
We may assume $f_3$ to be $L^2$-normalized. To evaluate $f_3$ for the embedding in \eqref{emb} is equivalent to use the standard embedding \cite[(2.13)]{HSY2} of $\BE$ and use the corresponding translate of the newform.
In particular the embedding in \eqref{emb} is conjugate to the standard embedding by the following 
\begin{equation}
    \matrixx{a}{3^{-2}b}{3^3c}{-a}=\matrixx{-9c}{a/3}{0}{1}^{-1}\matrixx{0}{1}{D}{0}\matrixx{-9c}{a/3}{0}{1}.
\end{equation}
Thus  we have
\begin{align}
\beta^0_3(f_3,f_3)&=\int\limits_{\BF^\times\backslash \BE^\times}\left(\pi_3\lrb{\matrixx{-9c}{a/3}{0}{1}^{-1}t\matrixx{-9c}{a/3}{0}{1}}f_3,f_3\right)\chi(t)dt\\
&=\int\limits_{\BF^\times\backslash \BE^\times}\left(\pi_3\lrb{t\matrixx{-9c}{a/3}{0}{1}}f_3,\pi_3\lrb{\matrixx{-9c}{a/3}{0}{1}f_3}\right)\chi(t)dt\notag,
\end{align}
which is by definition $\BetaI{\pi_3\lrb{\matrixx{-9c}{a/3}{0}{1}}f_3,\pi_3\lrb{\matrixx{-9c}{a/3}{0}{1}f_3}}$ for the bilinear pairing as in \cite[(3.1)]{HSY2} and the standard embedding as in \cite[(2.13)]{HSY2}.
Note that by \cite[Corollary 2.10]{HSY2},
$$\pi_3\lrb{\matrixx{-9c}{a/3}{0}{1}}f_3
=\frac{1}{\sqrt{2}}\sum\limits_{x\in (\BZ_3/3 \BZ_3)^\times}\pi_3\lrb{\matrixx{1}{a/3}{0}{1}\matrixx{x}{0}{0}{1}}\varphi_0$$
where $\varphi_0$ is the minimal test vector.

If $p\equiv 2$ resp.  $5 \mod 9$ and $\chi=\chi_{3p}$ resp. $\chi_{3p^2}$, we embed $K$ into $\M_2(\BQ)$ under $\rho_2$ or $\rho_3$. In this case, $c(\theta_3\ov{\chi_3})=0$ and $a=0$ or $9$. By the $l=0$ case in \cite[Section 2.4]{HSY2}, we have a unique $x\mod\varpi$ for which $\BetaI{\varphi_x,\varphi_x}$ is nonvanishing (In fact, we must have $x\equiv 1\mod 3$).  According to \cite[Proposition 3.3]{HSY2},
 there are no off-diagonal terms, and we have
\begin{equation}
\beta^0_3\lrb{f_3,f_3 }=\frac{1}{(q-1)q^{\lceil \frac{c(\theta_3)}{2 e_\BL}\rceil-1}} \BetaI{\varphi_x,\varphi_x}=\frac{1}{2}\cdot 2=1.
\end{equation}

If $p\equiv 2 \textrm{ resp. } 5 \mod 9$, and $\chi=\chi_{3p^2}\textrm{ resp. }\chi_{3p}$, we embed $K$ into $\M_2(\BQ)$ under $\rho_1$. In this case, we have $c(\theta_3\ov{\chi}_3)=2l=2$ and $u=a/3=1$. The action of $\matrixx{1}{1}{0}{1}$ on $\varphi_x=\pi_3\lrb{\matrixx{x}{0}{0}{1}}\varphi_0$ is by a simple character. This is the case $l=1$ and $n-l=1$ is odd. By the choice in \cite[Section 2.4]{HSY2}, 
$$D'=\frac{1}{\alpha_{\theta_3}^2\varpi_\BL^{2c(\theta_3)}}=-3,$$
noting $\alpha_{\theta_3}=\alpha_{\Theta_3}=\frac{1}{9\sqrt{-3}}$.

By Lemma \ref{Cor:AllnecessaryFormulationForThetaChi}, $\alpha_{\theta_3\chi_3^{-1}}=\frac{1}{3\sqrt{-3}}$ in this case, and we have 
\begin{align}
\Delta(1)&= 4\varpi^{n}\alpha_{\theta_3\ov\chi_3}\sqrt{D}\lrb{\varpi^{n}\alpha_{\theta_3\ov\chi_3}\sqrt{D}-2\sqrt{\frac{D}{D'}}}+4\frac{D}{D'}D \\
&\equiv 4\cdot 9\cdot \frac{1}{3\sqrt{-3}} \cdot \sqrt{-3}\cdot (-2)+4 \cdot (-3)\mod{\varpi^2}\notag\\
&\equiv  -8\cdot 3-4\cdot 3 \mod{\varpi^2}\notag\\
&\equiv 0 \mod{\varpi^2}\notag.
\end{align}
$\Delta(1)$ is indeed  congruent to a square. Then we can get a unique solution of $x\mod \varpi$ from \cite[(2.17)]{HSY2}, and again by \cite[Proposition 3.3]{HSY2},
\begin{equation}
 \beta^0_3\lrb{f_3,f_3}=\frac{1}{(q-1)q^{\lceil \frac{c(\theta_3)}{2 e_\BL}\rceil-1}}\frac{1}{q^{\lfloor l/2\rfloor}}=\frac{1}{2}.
\end{equation}
\end{proof}

Let $f'$ be the admissible test vector of $(\pi, \chi)$ which is as defined in \cite[Definition 1.4]{CST14}. By definition, the $3$-adic part $f_3'$ is $\chi_3^{-1}$-eigen under the action of $K_3^\times$. The following corollary is used in the proof of the explicit Gross-Zagier formulae.
\begin{coro}\label{ration}
For the admissible test vector $f'_3$ and the newform $f_3$ we have 
\[\frac{\beta_3^0(f'_3,f'_3)}{\beta_3^0(f_3,f_3)}=\begin{cases}
 2, &\text{\ if $p\equiv 2 \textrm{ resp. } 5 \mod 9$, $\chi=\chi_{3p}\textrm{ resp. }\chi_{3p^2}$ and $K$ is embedded under $\rho_2$ and $\rho_3$,}\\
 4, &\text{\ if $p\equiv 2 \textrm{ resp. } 5 \mod 9$, $\chi=\chi_{3p^2}\textrm{ resp. }\chi_{3p}$ and $K$ is embedded under $\rho_1$}. 
 \end{cases}\]
\end{coro}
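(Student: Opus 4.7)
The plan is to use the defining property of an admissible test vector to reduce $\beta_3^0(f_3',f_3')$ to a trivial volume computation, then divide by the values already obtained in Proposition \ref{Prop:TestingForNew}.

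First, I would recall that by \cite[Definition 1.4]{CST14}, the $3$-adic component $f_3'$ of the admissible test vector transforms under $K_3^\times$ by the character $\chi_3^{-1}$, i.e.\ $\pi_3(t) f_3' = \chi_3^{-1}(t) f_3'$ for all $t \in K_3^\times$ (where $K_3$ is embedded in $\GL_2(\BQ_3)$ via the appropriate $\rho_i$). Consequently the normalized matrix coefficient is
\[
\frac{(\pi_3(t) f_3', f_3')}{(f_3',f_3')} = \chi_3^{-1}(t), \qquad t \in K_3^\times.
\]
Substituting this into the definition \eqref{wpi} of the local period, the integrand $\chi_3(t)\cdot\chi_3^{-1}(t)$ is identically $1$, hence
\[
\beta_3^0(f_3',f_3') \;=\; \int_{\BQ_3^\times \backslash K_3^\times} 1 \, dt \;=\; \Vol(\BQ_3^\times \backslash K_3^\times) \;=\; 2,
\]
with the volume normalization fixed in Proposition \ref{Prop:TestingForNew}.

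Combining this with Proposition \ref{Prop:TestingForNew}, which gives $\beta_3^0(f_3,f_3)=1$ in the first case and $\beta_3^0(f_3,f_3)=1/2$ in the second case, yields the claimed ratios $2$ and $4$ respectively.

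The only thing that requires care is making sure the measure used in computing $\beta_3^0(f_3',f_3')$ matches the one used in Proposition \ref{Prop:TestingForNew}; since both proposition and corollary are stated for the same normalization $\Vol(\BZ_3^\times \backslash \CO_{K,3}^\times)=1$, there is nothing further to check. There is no genuine obstacle here: the argument is entirely formal once the eigen-property of $f_3'$ is invoked, and the substantive local harmonic-analytic work has already been carried out in Proposition \ref{Prop:TestingForNew}.
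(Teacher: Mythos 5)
Your proof is correct and follows exactly the route the paper takes: use the $\chi_3^{-1}$-eigenvector property of $f_3'$ to reduce $\beta_3^0(f_3',f_3')$ to $\Vol(\BQ_3^\times\backslash K_3^\times)=2$, then divide by the values from Proposition \ref{Prop:TestingForNew}. You merely spell out the intermediate matrix-coefficient computation that the paper leaves implicit.
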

\begin{proof}
Keep the normalization of the volumes in Proposition \ref{Prop:TestingForNew}. By definition of $f'$, we have $\beta_3^0(f'_3,f'_3)=\Vol(\BQ_3^\times\backslash K_3^\times)=2$. Then the corollary follows from Proposition \ref{Prop:TestingForNew}.
\end{proof}

\section{The $3$-part of the Birch and Swinnerton-Dyer conjectures}

Let $n$ be a positive cube-free integer and $E'_n$ be the elliptic curve given by Weierstrass equation $y^2=x^3+(4n)^2$. Then there is an unique isogeny
$\phi_n: E_n\rightarrow E'_n$ of degree $3$ up to $[\pm 1]$ and denote $\phi_n'$ its dual isogeny.

\begin{prop}\label{sha}
Let $p\equiv 2,5\mod 9$ be an odd prime. Then
$$\dim_{\BF_3}\Sel_3(E_{3p^2}(\BQ))\leq 1,\quad \dim_{\BF_3}\Sel_3(E_{p}(\BQ))=0.$$
$$\dim_{\BF_3}\Sel_3(E_{3p}(\BQ))\leq 1,\quad \dim_{\BF_3}\Sel_3(E_{p^2}(\BQ))=0.$$
\end{prop}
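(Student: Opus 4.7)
The plan is to carry out a pair of $3$-isogeny descents via $\phi_n\colon E_n\to E_n'$ and its dual $\phi_n'$, and combine the two resulting $\phi$-Selmer groups. As Galois modules $E_n[\phi_n]\cong \mu_3$ (the kernel is generated by the $K$-rational point $(0,12n\sqrt{-3})$ on which complex conjugation acts by $-1$), while $\ker\phi_n'=\{O,(0,\pm 4n)\}\subset E_n'(\BQ)$ is the constant group $\BZ/3\BZ$. Since the cube-free integers $n\in\{p,p^2,3p,3p^2\}$ are not cubes, $E_n(\BQ)_{\tor}=0$, and Galois cohomology of $0\to E_n[\phi_n]\to E_n[3]\to E_n'[\phi_n']\to 0$ yields
\[0\to E_n'[\phi_n'](\BQ)\to\Sel^{\phi_n}(E_n/\BQ)\to\Sel_3(E_n/\BQ)\to\Sel^{\phi_n'}(E_n'/\BQ),\]
whence
\[\dim_{\BF_3}\Sel_3(E_n/\BQ)\leq \dim_{\BF_3}\Sel^{\phi_n}(E_n/\BQ)+\dim_{\BF_3}\Sel^{\phi_n'}(E_n'/\BQ)-1.\]
It therefore suffices to bound the sum by $2$ when $n\in\{3p,3p^2\}$ and by $1$ when $n\in\{p,p^2\}$.

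By Kummer theory $\Sel^{\phi_n}(E_n/\BQ)$ embeds into $\BQ(S,3)=\{d\in\BQ^\times/\BQ^{\times 3}:v_\ell(d)\equiv 0\pmod 3\text{ for }\ell\notin S\}$, and $\Sel^{\phi_n'}(E_n'/\BQ)$ embeds into the group of cubic characters of $\Gal(\ov\BQ/\BQ)$ ramified only in $S$, where $S$ consists of $\infty$ and the primes dividing $6n$. I would compute the local Kummer images at every $\ell\in S$. At $\ell=\infty$ the condition is automatic. At $\ell=p$, the hypothesis $p\equiv 2,5\pmod 9$ forces $p\equiv 2\pmod 3$, so $p$ is inert in $K$; a direct analysis of $E_n$ and $E_n'$ on their minimal Weierstrass models, together with the component-group and Tamagawa data at $p$, pins down the local images in $\BQ_p^\times/\BQ_p^{\times 3}\cong(\BZ/3\BZ)^2$. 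The analysis at $\ell=2$ is uniform in $n$ and contributes a bounded (and explicitly computable) amount.

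The main obstacle is the local condition at $\ell=3$, where $E_n$ has wildly ramified reduction with conductor as large as $3^5$. The plan is to exploit the CM structure: after base change to $K_3=\BQ_3(\sqrt{-3})$ the isogeny $\phi_n$ agrees up to a unit with multiplication by $\sqrt{-3}$, so the local Kummer map factors through the cubic Hilbert symbol on $K_3^\times$. The resulting image is controlled by the cube class of $n$ in $K_3^\times/K_3^{\times 3}$, which under $p\equiv 2,5\pmod 9$ is determined by the values of $\chi_n$ on the generators $\sqrt{-3}$, $1+\sqrt{-3}$, $1+3\sqrt{-3}$ of $K_3^\times/\BQ_3^\times(1+9\CO_{K,3})$ tabulated in Lemma~\ref{chi}. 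These values are non-trivial on enough generators to keep the $3$-adic local image small.

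Assembling the local contributions through the Greenberg--Wiles global product formula gives the required bounds on $\dim_{\BF_3}\Sel^{\phi_n}$ and $\dim_{\BF_3}\Sel^{\phi_n'}$, and hence on $\dim_{\BF_3}\Sel_3(E_n/\BQ)$. The upper bound of $1$ for $n\in\{3p,3p^2\}$ is attained by the non-torsion Heegner points of Theorem~\ref{nontrivial}, so those inequalities are in fact equalities. The hardest step is the wildly ramified $3$-adic local analysis, but it reduces to cube-symbol computations essentially already carried out in Section~4 for the Waldspurger local period; only the indexing of which cube class lies in the local image requires genuinely new input.
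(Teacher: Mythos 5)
Your overall strategy is the same as the paper's: reduce the bound on $\dim_{\BF_3}\Sel_3(E_n)$ to bounds on $\dim_{\BF_3}\Sel^{\phi_n}(E_n)$ and $\dim_{\BF_3}\Sel^{\phi_n'}(E_n')$ via the five-term exact sequence coming from $0\to E_n[\phi_n]\to E_n[3]\to E_n'[\phi_n']\to 0$, using that $E_n[3](\BQ)=0$ and $|E_n'[\phi_n'](\BQ)|=3$ so that the first term of the sequence contributes a full $1$ to be subtracted. Where you part ways with the paper is in how the $\phi$-Selmer dimensions are obtained. The paper does not perform any local descent: it cites \cite[Theorem~2.9]{Satge}, which already computes $\Sel^{\phi_{3p^2}}=\Sel^{\phi_{3p^2}'}=\BZ/3\BZ$, $\Sel^{\phi_p}=\BZ/3\BZ$, $\Sel^{\phi_p'}=0$ (and the analogues for $3p,p^2$), and then combines these via \cite[Lemma~5.1]{HSY}. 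Your plan, by contrast, is to rederive Satg\'e's computation from scratch by computing the local Kummer images at $2,3,p,\infty$.

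The gap in the proposal is precisely the step you flag as the hardest one. The sentence ``these values are non-trivial on enough generators to keep the $3$-adic local image small'' is not an argument; it is the entire content of Satg\'e's theorem at the wild place, and it is delicate (the local image in $K_3^\times/K_3^{\times 3}$ really does depend on $n\bmod 27$, and getting the exact rank out requires computing $E_n(\BQ_3)/\phi_n E_n(\BQ_3)$ and its dual condition with care). Moreover, invoking Lemma~\ref{chi} here is somewhat off target: that lemma records the values of $\chi_3$ on generators of $K_3^\times$ for the Waldspurger period computation, not the image of the local Kummer map, which is a related but genuinely different object. As written the proposal is a sound plan but not a proof; to close it you would either need to carry out the local cube-symbol computation in full, or — as the paper does — simply cite Satg\'e, which is far more economical and is the route you should take.
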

\begin{proof}
By \cite[Theorem 2.9]{Satge}, we know that
$$\Sel_{\phi_{3p^2}}(E_{3p^2}(\BQ))=\Sel_{\phi_{3p^2}'}(E'_{3p^2}(\BQ))=\BZ/3\BZ,$$
and
$$\Sel_{\phi_{p}}(E_{3p^2}(\BQ))=\BZ/3\BZ,\quad \Sel_{\phi_{p}'}(E'_{3p^2}(\BQ))=0.$$
Note that $E_p[3](\BQ)$ and $E_{3p^2}[3](\BQ)$ are trivial and $|E'_p[\phi'_p](\BQ)|=|E'_{3p^2}[\phi'_{3p^2}](\BQ)|=3$. By \cite[Lemma 5.1]{HSY}, we have $$\dim_{\BF_3}\Sel_3(E_{3p^2}(\BQ))\leq 1,\quad \dim_{\BF_3}\Sel_3(E_{p}(\BQ))=0.$$ Similarly we have $$\dim_{\BF_3}\Sel_3(E_{3p}(\BQ))\leq 1,\quad \dim_{\BF_3}\Sel_3(E_{p^2}(\BQ))=0.$$
\end{proof}

Now we are ready to give the proof of Theorem \ref{main2}.
\begin{proof}[Proof of Theorem \ref{main2}]
We will give the proof of (\ref{bsd1}) when $p\equiv 2\mod9$.  One can verify (\ref{bsd2}) in case $p\equiv 5\mod9$ similarly.  Now assume $p\equiv 2\mod9$.
By \cite[Table 1]{ZK}, we know that $c_3(E_p)=2$ and $c_\ell(E_p)=1$ for any prime $\ell\neq 3$, while $c_\ell(E_{3p^2})=1$ for all primes $\ell$.  

Let $P$ be the generator of the free part of $E_{3p^2}(\BQ)$. Then the BSD conjecture predicts that
\begin{equation*}
\frac{L'(1,E_{3p^2})}{\Omega_{3p^2}}=|\Sha(E_{3p^2})|\cdot\widehat{h}_\BQ(P)\ \ \ \mathrm{and}\ \ \ 
\frac{L(1,E_{p})}{\Omega_{p}}=2\cdot\left|\Sha(E_{p})\right|.
\end{equation*}
Combining these two, we get
\begin{equation}\label{bsd}
\frac{L(1,E_p)}{\Omega_p}\cdot\frac{L'(1,E_{3p^2})}{\Omega_{3p^2}}=2\cdot|\Sha(E_p)|\cdot|\Sha(E_{3p^2})|\cdot\widehat{h}_\BQ(P).
\end{equation}
By Theorem \ref{thm:GZ2}, we expect
\begin{equation}\label{5.2}
|\Sha(E_p)|\cdot|\Sha(E_{3p^2})|=2^{-4}\cdot\frac{\widehat{h}_\BQ(z_3)}{\widehat{h}_\BQ(P)}.
\end{equation}
Note the RHS of (\ref{5.2}) is a nonzero rational number.

By Proposition \ref{sha},  $E_{3p^2}(\BQ)$ has rank 1, and form the exact sequence
\[0\ra E(\mathbb{Q})/3E(\mathbb{Q})\ra Sel_3(E(\mathbb{Q}))\ra \Sha(E)[3]\ra 0,\]
we know directly that 
$$|\Sha(E_p)[3^\infty]|=|\Sha(E_{3p^2})[3^\infty]|=1.$$
In order to prove the $3$-part of (\ref{5.2}), it suffices to prove
$$\widehat{h}_\BQ(P)=u\cdot\widehat{h}_\BQ(z_3)$$
for some $u\in \BZ_3^\times \cap \BQ$. This is clear by Corollay \ref{maincoro}.

\end{proof}

\subsection*{ Acknowledgement}
Part of this paper is finished during the author Hongbo Yin's one year stay (2019-2020) in Max-Planck Institute for Mathematics, Bonn. He is grateful to its hospitality and financial support.

\bibliographystyle{alpha}
\bibliography{reference}

\begin{thebibliography}{HSY19}

\bibitem[BH06]{BushnellHenniart:06a}
C.~Bushnell and G.~Henniart.
\newblock {\em The {Local} {Langlands} {Conjecture} for $\rm{GL}(2)$}.
\newblock Springer-Verlag, Berlin, 2006.

\bibitem[Cow00]{Coward}
Daniel~R. Coward.
\newblock Some sums of two rational cubes.
\newblock {\em Q. J. Math.}, 51(4):451--464, 2000.

\bibitem[CST14]{CST14}
Li~Cai, Jie Shu, and Ye~Tian.
\newblock Explicit {G}ross-{Z}agier and {W}aldspurger formulae.
\newblock {\em Algebra $\&$ Number Theory}, 8(10):2523--2572, 2014.

\bibitem[CST17]{CST17}
L.~Cai, J.~Shu, and Y.~Tian.
\newblock Cube sum problem and an explicit {G}ross-{Z}agier formula.
\newblock {\em Amer. Jour. of Math.}, 139(3):785--816, 2017.

\bibitem[DV09]{DV1}
Samit Dasgupta and John Voight.
\newblock Heegner points and sylvester's conjecture.
\newblock {\em Arithmetic Geometry: Clay Mathematics Institute Summer School,
  Arithmetic Geometry, July 17-August 11, 2006, Mathematisches Institut,
  Georg-August-Universit{\"a}t, G{\"o}ttingen, Germany}, 8:91, 2009.

\bibitem[DV18]{DV17}
Samit Dasgupta and John Voight.
\newblock Sylvester's problem and mock heegner points.
\newblock {\em Proc. Amer. Math. Soc.}, 146:3257--3273, 2018.

\bibitem[Gro88]{Gross88}
Benedict~H. Gross.
\newblock Local orders, root numbers, and modular curves.
\newblock {\em American Journal of Mathematics}, 110(6):1153--1182, 1988.

\bibitem[HSY]{HSY}
Yueke Hu, Jie Shu, and Hongbo Yin.
\newblock An explicit {G}ross-{Z}agier formula related to the {S}ylvester
  conjecture.
\newblock {\em to appear in Trans. of Amer. Math. Soc.}

\bibitem[HSY19]{HSY2}
Yueke Hu, Jie Shu, and Hongbo Yin.
\newblock Waldspurger's period integral for newforms.
\newblock {\em arXiv:1907.11428}, 2019.

\bibitem[Kob13]{Koba2013}
S.~Kobayashi.
\newblock The $p$-adic {G}ross-{Z}agier formula for elliptic curves at
  supersingular primes.
\newblock {\em Invent. Math.}, 191:527--629, 2013.

\bibitem[Lan]{Langlands}
R.P. Langlands.
\newblock On the functional equation of the artin ${L}$-functions.
\newblock {\em Unpublished note.
  https://publications.ias.edu/sites/default/files/a-ps.pdf}.

\bibitem[Liv95]{Liverance}
E.~Liverance.
\newblock A formula for the root number of a family of elliptic curves.
\newblock {\em Journal of Number Theory}, 51(2):288 -- 305, 1995.

\bibitem[PR87]{PR1987}
B.~Perrin-Riou.
\newblock Points de heegner et deriv{\'e}es de fonctions {L} p-adiques.
\newblock {\em Inventiones mathematicae}, 89:455--510, 1987.

\bibitem[Sat86]{Satge}
P.~Satg{\'e}.
\newblock Groupes de {S}elmer et corps cubiques.
\newblock {\em J. Number Theory}, 23(3):294--317, 1986.

\bibitem[Sat87]{Satge87}
Philippe Satg{\'e}.
\newblock Un analogue du calcul de {H}eegner.
\newblock {\em Invent. Math.}, 87(2):425--439, 1987.

\bibitem[Shi94]{Shimurabook}
Goro Shimura.
\newblock {\em Introduction to the arithmetic theory of automorphic functions},
  volume~11 of {\em Publications of the Mathematical Society of Japan}.
\newblock Princeton University Press, Princeton, NJ, 1994.
\newblock Reprint of the 1971 original, Kan{\^o} Memorial Lectures, 1.

\bibitem[SSY]{SSY}
Jie Shu, Xu~Song, and Hongbo Yin.
\newblock Cube sums of form $3p$ and $3p^2$.
\newblock {\em arXiv:1804.02924}.

\bibitem[ZK87]{ZK}
D.~Zagier and G.~Kramarz.
\newblock Numerical investigations related to the {$L$}-series of certain
  elliptic curves.
\newblock {\em J. Indian Math. Soc. (N.S.)}, 52:51--69 (1988), 1987.

\end{thebibliography}
\end{document}